\numberwithin{equation}{section}
\newtheorem{thm}{Theorem}[section]
\newtheorem{cor}[thm]{Corollary}
\newtheorem{lem}[thm]{Lemma}
\newtheorem{prop}[thm]{Proposition}
\theoremstyle{definition}
\newtheorem{defn}[thm]{Definition}
\newtheorem{rem}[thm]{Remark}
\newtheorem*{notn}{Notation}
\newcommand{\olR}{\overline{R}}
\newcommand{\olM}{\overline{M}}
\renewcommand\le{\leqslant}
\renewcommand\ge{\geqslant}
\newcommand{\msJ}{\mathscr{J}}
\newcommand{\F}{\mathbb{F}}
\newcommand{\Z}{\mathbb{Z}}
\newenvironment{augmat}[1]{%
  \left(\begin{array}{@{}*{#1}{c}|c@{}}
}{%
  \end{array}\right)
}
\tikzstyle{vertex}=[circle,draw,inner sep=0pt, minimum size=6pt]
\begin{document}

\title{Covering rings by proper ideals}

\author[M. H. W. Chen]{Malcolm Hoong Wai Chen} \address{Department of Mathematics, University of Manchester, Manchester M13 9PL, UK} \email{malcolmhoongwai.chen@manchester.ac.uk}

\author[E. Swartz]{Eric Swartz} \address{Department of Mathematics, William \& Mary, Williamsburg, VA 23187, USA} \email{easwartz@wm.edu}

\author[N. J. Werner]{Nicholas J. Werner} \address{Department of Mathematics, Computer and Information Science, SUNY at Old Westbury, Old Westbury, NY 11568, USA} \email{wernern@oldwestbury.edu}

\subjclass{Primary 16P10; Secondary 16N40, 05E16.}
\keywords{rings without identity; ideal cover; covering}

\begin{abstract}
A \emph{cover by left ideals} of an associative (not necessarily commutative or unital) ring $R$ is a collection of proper left ideals whose set-theoretic union equals $R$. If such a cover exists, then $\eta_\ell(R)$ is the cardinality of a minimal cover, and $R$ is $\eta_\ell$-elementary if $\eta_\ell(R)<\eta_\ell(R/I)$ for every nonzero two-sided ideal $I$ of $R$. We classify all $\eta_\ell$-elementary rings, and determine their covering numbers. Covers by right or two-sided ideals are also studied. This completely characterizes rings admitting finite covers by ideals. Our results generalize to finite covers of modules by submodules, and we determine all possible covering numbers.
\end{abstract}

\maketitle

\section{Introduction}

In this paper, we assume rings to be associative, but not necessarily commutative nor to have a multiplicative identity. Our goal is to determine when such a ring can be expressed as a finite union of its proper left, right, or two-sided ideals, as well as the cardinality of such a minimal cover. 

The motivation for this problem comes from group theory. A group $G$ is said to be \emph{coverable} if there exists a collection of proper subgroups of $G$ whose union is equal to $G$. It is an easy exercise to show that a group is never the union of two of its proper subgroups, and a classical result due to Scorza \cite{Scorza_1926} shows that a group is the union of three proper subgroups if and only if it has a quotient isomorphic to the Klein four-group. Later, Cohn \cite{Cohn_1994} began to systematically study the \emph{covering number} $\sigma(G)$ of a group $G$, which is the minimum number of subgroups required to cover $G$. Cohn showed that there exists a group with covering number $p^d+1$ for every prime $p$ and integer $d \ge 1$, and Tomkinson \cite{Tomkinson_1997} showed that there is no group with covering number seven. The covering numbers of groups have been studied extensively, and all integers $N \le 129$ that are not the covering number of a group were determined in \cite{Garonzi_Kappe_Swartz_2022}, which also contains a survey of the related work for groups.

This motivated inquiry into a broader question: What are the covering numbers of other algebraic structures? This has been studied for vector spaces \cite{Clark_2012,Khare_2009}, modules (over commutative rings) \cite{Ghosh_2022,Khare_Tikaradze_2022}, loops \cite{Gagola_Kappe_2016}, and semigroups \cite{Donoven_Kappe_2023}. We refer the reader to the survey article \cite{Kappe_2014} for more details. 

The related question for rings has also received significant attention. This was first initiated by Lucchini and Mar\'{o}ti \cite{Lucchini_Maroti_2012}, who determined all rings with covering number three. Building upon a series of earlier work \cite{Cai_Werner_2019,Crestani_2012,Peruginelli_Werner_2018,Swartz_Werner_2021,Swartz_Werner_2024a}, the recent article \cite{Swartz_Werner_2024b} completely determined which integers can occur as covering numbers of rings. For a ring $R$, let $\sigma(R)$ be the minimum number of proper subrings of $R$ necessary to cover $R$ (if such a cover exists). The approach in \cite{Swartz_Werner_2024b} is based on the observation that if $R$ has a two-sided ideal $I$ such that $R/I$ has a cover, then it can be lifted to a cover of $R$, so $\sigma(R) \le \sigma(R/I)$. A ring $R$ is called \emph{$\sigma$-elementary} if the inequality $\sigma(R) < \sigma(R/I)$ is strict for all $I \neq \{0\}$. Since any coverable ring is either $\sigma$-elementary, or has a proper $\sigma$-elementary residue ring with the same covering number, knowledge of $\sigma$-elementary rings allows us to characterize coverable rings and determine their covering numbers. A classification of all $\sigma$-elementary rings is given in \cite[Thm.\ 1.3]{Swartz_Werner_2024b}.

While many questions on the covering numbers of groups remain open, if one restricts to covers by \emph{normal} subgroups there is an elegant result \cite[Thm.\  1]{Brodie_Chamberlain_Kappe_1988} (see also \cite{Bhargava_2002}): a group $G$ is the union of proper normal subgroups if and only if it has a quotient isomorphic to $C_p \times C_p$ (the direct product of two cyclic groups of prime order $p$); the corresponding covering number $\eta(G)$ is $p+1$ for the smallest such $p$. Our investigations in this paper are prompted by the ring-theoretic analogue, namely covers by proper ideals. 

Motivated by the preceding discussions, we make the following definitions.

\begin{defn}
Let $R$ be a ring. The \emph{left} (resp.\ \emph{right, two-sided}) \emph{ideal covering number} $\eta_\ell(R)$ (resp.\ $\eta_r(R)$, $\eta(R)$) is the cardinality of a minimal cover of $R$ by proper left (resp.\ right, two-sided) ideals, if such a cover exists, and $\infty$ otherwise. 
 
We say that $R$ is \emph{$\eta_\ell$-elementary} (resp.\ $\eta_r$- or \emph{$\eta$-elementary}) if $\eta_\ell(R) < \eta_\ell(R/I)$ (resp.\ $\eta_r(R)<\eta_r(R/I)$, $\eta(R)<\eta(R/I)$) for every nonzero two-sided ideal $I$ of $R$.

We use the notation $\eta_*(R)$ and \emph{$\eta_*$-elementary} in a statement which would apply to any of $\eta_\ell$, $\eta_r$, or $\eta$. Note that an $\eta_*$-elementary ring is coverable by the relevant type of ideal, which we will refer to as \emph{$*$-ideals}.
\end{defn}

We remark that any unital ring cannot be covered by proper ideals, as any ideal containing the multiplicative identity is the whole ring. It follows that $\eta_*$-elementary rings are necessarily nonunital. However, as we shall see, any $\eta_\ell$-elementary (resp.\ $\eta_r$-elementary) ring that is not $\eta$-elementary contains a left (resp.\ right) multiplicative identity.

\begin{notn}
Let $C_p$ denote the cyclic group of order $p$, $\F_q$ the finite field of order $q$ (a prime power), and $M_n(\F_q)$ the ring of $n \times n$ matrices with entries from $\F_q$. 
\end{notn}

Covers by two-sided ideals were first considered independently by O'Neill \cite{ONeill_1980} and Parmenter \cite{Parmenter_1994}; in each paper, the authors provide characterizations of rings with covers by two-sided ideals. In contrast, covers by left (resp.\ right) ideals were, to the best of our knowledge, first studied in the preprint \cite{Chen_2025}, which suggested that $$\left\{\begin{pmatrix} a & b \\ 0 & 0 \end{pmatrix} : a,b \in \F_p \right\} \le M_2(\F_p)$$ is an $\eta_\ell$-elementary ring for every prime $p$. In this present work, we generalize these results, providing a complete classification of $\eta_*$-elementary rings in each case as well as determining their ideal covering numbers. A ring $R$ that is coverable by a finite number of $*$-ideals has an $\eta_*$-elementary residue ring with the same ideal covering number. Hence, our classification completely characterizes rings that are coverable by a finite number of proper ideals, and determines all possible ideal covering numbers of such rings.

Since every commutative $\eta_\ell$- and $\eta_r$-elementary ring is also $\eta$-elementary, it suffices to consider the noncommutative case when studying covers by one-sided ideals. Our main theorems are the following.

\begin{thm}\label{thm:two-sided}
    $R$ is an $\eta$-elementary ring if and only if $R^2=\{0\}$ and $(R,+) \cong C_p \times C_p$ for some prime $p$. Moreover, $\eta(R)=p+1$.
\end{thm}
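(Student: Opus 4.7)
For the $(\Leftarrow)$ direction I would verify directly. Assuming $R^2 = \{0\}$, every additive subgroup of $R$ is a two-sided ideal. In $(R,+) \cong C_p \times C_p$, the $p+1$ subgroups of order $p$ partition the $p^2 - 1$ nonzero elements into classes of size $p-1$, and a counting check ($p$ such subgroups can cover at most $p(p-1) + 1 < p^2$ elements) shows the minimum cover size is $p+1$, so $\eta(R) = p+1$. Every nonzero two-sided ideal $I$ has $R/I \cong C_p$ (with trivial multiplication) or $R/I = \{0\}$; neither admits a cover by proper ideals, so $\eta(R/I) = \infty$, confirming $R$ is $\eta$-elementary.

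For the $(\Rightarrow)$ direction, assume $R$ is $\eta$-elementary with minimal cover $\{I_1, \ldots, I_n\}$. I first observe $\bigcap_{j=1}^n I_j = \{0\}$, since otherwise setting $J = \bigcap_j I_j$ gives $\{I_j/J\}$ as a cover of $R/J$ by $n$ proper ideals, so $\eta(R/J) \leq n = \eta(R)$, contradicting $\eta$-elementary.

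The heart of the argument is to prove $R^2 = \{0\}$. The main claim is $\eta(R) = \eta(R/R^2)$, from which $\eta$-elementary applied to the ideal $R^2$ yields $R^2 = \{0\}$. The inequality $\eta(R) \leq \eta(R/R^2)$ is obtained by lifting a cover from the trivial-multiplication quotient $R/R^2$. For the reverse, one pushes the minimal cover $\{I_j\}$ forward to $R/R^2$: this works directly when $I_j + R^2 \neq R$ for every $j$, and in the problematic case $I_j + R^2 = R$ for some $j$ one has $(R/I_j)^2 = R/I_j$. Enlarging $I_j$ to a maximal ideal $M_j$ (whose quotient $R/M_j$ is a unital simple ring) and invoking the pairwise coprimality of distinct maximal enlargements $\{M_k\}$ (forced by minimality), one analyzes $B := \bigcap_{k \neq j} M_k$ and $J := \bigcap_k M_k$. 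If $J \neq \{0\}$, the cover $\{M_k\}$ descends to $R/J$ giving $\eta(R/J) \leq n$, a contradiction. If $J = \{0\}$ and $B \neq \{0\}$, then $R \cong B \oplus M_j$ as a ring direct sum; the unital structure of $R/M_j \cong B$ forces ideals of the direct sum to split, and a cover-count shows $\eta(R) = \eta(M_j) = \eta(R/B)$, again a contradiction. The residual case $J = B = \{0\}$ forces $M_j$ to be principal (as $R r_0 = r_0 R = M_j$ for a specific $r_0 \in M_j \setminus \bigcup_{k \neq j} M_k$), and a further structural argument using the subdirect embedding $R \hookrightarrow \prod_k R/M_k$ yields the final contradiction.

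Once $R^2 = \{0\}$, every additive subgroup of $R$ is a two-sided ideal, so $\eta(R) = \sigma((R,+))$, the abelian-group covering number. By the Brodie-Chamberlain-Kappe theorem, $\sigma((R,+)) = p+1$ for the smallest prime $p$ such that $(R,+)$ admits a $C_p \times C_p$ quotient. The kernel of any surjection $R \to C_p \times C_p$ is a two-sided ideal (as $R$ has trivial multiplication) and, if nonzero, would yield $\eta(R/\mathrm{kernel}) = p+1 = \eta(R)$, violating $\eta$-elementary; hence $(R,+) \cong C_p \times C_p$ and $\eta(R) = p+1$. The main obstacle is the case $I_j + R^2 = R$ in establishing $\eta(R) = \eta(R/R^2)$, especially the sub-case $J = B = \{0\}$, where a direct-sum decomposition of $R$ is unavailable and a more delicate structural analysis of the subdirect embedding is required.
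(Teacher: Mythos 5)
Your $(\Leftarrow)$ direction is sound and essentially matches the paper's (the paper cites the group covering number fact; you verify it directly, which is fine). The $(\Rightarrow)$ direction, however, takes a genuinely different and ultimately incomplete route. The paper does \emph{not} attempt to show $\eta(R) = \eta(R/R^2)$ directly. Instead it first proves (via results of Neumann and Lewin) that $R$ is finite of prime characteristic, invokes the Wedderburn--Malcev decomposition $R = S \oplus J$ where $J = \msJ(R)$, and uses Nakayama's lemma to establish the crucial annihilation relations $JR = RJ = \{0\}$. This reduces the multiplication to $(s_1+x_1)(s_2+x_2) = s_1 s_2$, after which a short counting argument --- comparing $\eta(R)$ to the abelian group covering number $\sigma((R,+)) = p+1$ via Cohn's theorem --- shows $S = \{0\}$, i.e.\ $R^2 = \{0\}$.

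Your approach has a genuine gap, which you yourself flag: the key claim $\eta(R/R^2) \le \eta(R)$ is not established in the case where some $I_j + R^2 = R$, and in particular the sub-case $J = B = \{0\}$ is left with a promissory note (``a further structural argument \dots yields the final contradiction''). The intermediate steps in that branch are also shaky: you assert without justification that $R/M_j$ is a \emph{unital} simple ring (true for finite simple rings $T$ with $T^2 \neq \{0\}$, but you have not yet shown $R$ is finite --- the observation $\bigcap_j I_j = \{0\}$ alone does not give finiteness; one also needs Neumann's lemma that each $I_j$ has finite additive index), and the assertion $Rr_0 = r_0R = M_j$ does not follow from $r_0 \in M_j \setminus \bigcup_{k \neq j} M_k$ without further argument. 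Since the claim $\eta(R) = \eta(R/R^2)$ is the entire engine of your $(\Rightarrow)$ direction, and it is precisely the hard case that is missing, the proof does not go through as written. If you want to salvage this route, the missing structural analysis is in effect a repackaging of what the paper's Nakayama argument (Lemma \ref{lem:JR=0}) accomplishes cleanly, and I would recommend adopting that machinery rather than pushing the subdirect-embedding case analysis further.
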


From Theorem \ref{thm:two-sided}, we recover \cite[Thm.\ 1]{ONeill_1980} and \cite[Thm.\  2]{Parmenter_1994}: $R$ is coverable by finitely many two-sided ideals if and only if $R$ has a finite homomorphic image $S$ such that $S^2=\{0\}$ and $(S,+)$ is a nontrivial noncyclic group.  We remark that Theorem \ref{thm:two-sided} can be proved independently by appealing to \cite[Thm.\ 1]{ONeill_1980} or \cite[Thm.\ 2]{Parmenter_1994}---which reduces the problem to covering a noncyclic abelian group---and applying a result of Cohn \cite[Thm.\ 2]{Cohn_1994}.  We include our proof of Theorem \ref{thm:two-sided} for completeness.

The situation for rings with a cover by left or right ideals is far more interesting.

\begin{thm} \label{thm:main}
Let $R$ be a noncommutative ring.
    \begin{enumerate} \itemsep0.5em
    \item $R$ is $\eta_\ell$-elementary if and only if $$R \cong \left\{ \begin{pmatrix} A & v \\ 0 & 0 \end{pmatrix} : A \in M_n(\F_q) \text{ and } v \in \F_q^n \right\} \le M_{n+1}(\F_q)$$ for some prime power $q$ and integer $n \ge 1$. We have $\eta_\ell(R)=\frac{q^{n+1}-1}{q-1}$.
    \item $R$ is $\eta_r$-elementary if and only if $$R \cong \left\{ \begin{pmatrix} A & 0 \\ v^T & 0 \end{pmatrix} : A \in M_n(\F_q) \text{ and } v \in \F_q^n \right\} \le M_{n+1}(\F_q)$$ for some prime power $q$ and integer $n \ge 1$. We have $\eta_r(R)=\frac{q^{n+1}-1}{q-1}$.
    \end{enumerate}
\end{thm}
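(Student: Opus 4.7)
The two parts of Theorem \ref{thm:main} are symmetric via passage to opposite rings, so I focus on (i); set $R_n := R_{n,q}$. For the ``if'' direction, I would identify $R_n$ with $\mathrm{Mat}_{n \times (n+1)}(\F_q)$ in the obvious way. Under this identification, left multiplication by $R_n$ on itself factors through $M_n(\F_q)$, and as a left $M_n(\F_q)$-module, $R_n$ is a direct sum of $n+1$ copies of the simple module $\F_q^n$ (one per column). By the standard bijection between submodules of a semisimple module $S^{n+1}$ (with simple $S$ having commuting field $\mathrm{End}(S) = \F_q$) and $\F_q$-subspaces of $\F_q^{n+1}$, the left ideals of $R_n$ correspond to $\F_q$-subspaces $U \subseteq \F_q^{n+1}$ via $U \mapsto L_U := \{M : \mathrm{rowsp}(M) \subseteq U\}$. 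Any rank-$n$ matrix has an $n$-dimensional hyperplane as its row space, so any proper $L_U$ containing such an $M$ must have $U$ equal to that hyperplane; conversely every matrix has row space of dimension $\le n$ and lies in some hyperplane. Hence $\eta_\ell(R_n) = \tfrac{q^{n+1}-1}{q-1}$. A direct computation identifies the two-sided ideals of $R_n$ as $\{0\}$, $N := \{(0,v) : v \in \F_q^n\}$, and $R_n$, with $R_n/N \cong M_n(\F_q)$ unital and hence uncoverable; this proves $R_n$ is $\eta_\ell$-elementary.

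For the converse, let $R$ be any $\eta_\ell$-elementary noncommutative ring. Since $R^2 \neq 0$, Theorem \ref{thm:two-sided} shows $R$ is not $\eta$-elementary, so by the remark following the definition $R$ contains a left identity $e$. Decompose $R = S \oplus N$ additively with $S := Re$ a unital subring (with identity $e$) and $N := \{a \in R : ae = 0\}$ a left ideal; the relations $ae = 0$ and $se = s$ yield $SS, NS \subseteq S$ and $SN, NN \subseteq N$. I would next show $R$ is finite, using B. H. Neumann's lemma on an irredundant cover to produce a left ideal $L$ of finite additive index, then passing to the annihilator of $R/L$ (a two-sided ideal $I$ with $R/I \hookrightarrow \mathrm{End}_\Z(R/L)$ finite) and iterating via $\eta_\ell$-elementarity. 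Once $R$ is finite, the Jacobson radical and Wedderburn--Artin structure theory apply: combining these with elementarity, I would show $R/\mathrm{rad}(R) \cong M_n(\F_q)$ is simple (ruling out multiple matrix factors by considering how each factor contributes a coverable quotient) and $\mathrm{rad}(R) = N$ with $N \cong \F_q^n$ as a simple left $M_n(\F_q)$-module (the covering-number formula $\tfrac{q^{n+1}-1}{q-1}$ from the easy direction forces the module dimension).

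The main obstacle is the final identification: showing that the right $S$-action on $N$ is trivial, i.e., $NS = 0$, so that $R$ is the trivial extension $M_n(\F_q) \ltimes \F_q^n \cong R_n$ rather than a nontrivial $S$-bimodule extension. A nontrivial right action would alter the lattice of left ideals of $R$ (and potentially the covering number), so the argument must carefully leverage $\eta_\ell$-elementarity to rule out such alternatives---likely by exhibiting either a proper two-sided ideal $I$ of such an $R$ with $\eta_\ell(R/I) \le \eta_\ell(R)$, contradicting elementarity, or by producing a strictly smaller cover of $R$ itself that contradicts minimality.
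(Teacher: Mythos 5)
Your argument for the ``if'' direction is correct and takes a genuinely different (and arguably cleaner) route than the paper. Rather than exhibiting the two explicit families of maximal left ideals $L_v$ and $N_V$ as in Section~\ref{sec: new numbers} and verifying separately that each is maximal and cyclic, you identify $R$ with $n \times (n+1)$ matrices and invoke the bijection between left $M_n(\F_q)$-submodules of $(\F_q^n)^{n+1}$ and $\F_q$-subspaces $U \subseteq \F_q^{n+1}$, so that the maximal left ideals are exactly the $L_U$ for $U$ a hyperplane. Rank-$n$ matrices then lie in a unique maximal left ideal and realize every hyperplane, giving $\eta_\ell(R) = \frac{q^{n+1}-1}{q-1}$ in one stroke; the paper's $L_v$ is your $L_U$ for $U = \{(a,b) : a^T v = b\}$, and $N_V$ is $L_{V^\perp \times \F_q}$, so the two pictures agree, but yours avoids the case split between $L_v$'s and $N_V$'s.

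The ``only if'' direction has two genuine gaps. First, you begin by invoking the introductory remark that an $\eta_\ell$-elementary ring which is not $\eta$-elementary has a left identity. That remark is announced as a \emph{consequence} of Theorem~\ref{thm:main} (``as we shall see''), not established beforehand, so using it as an input is circular. The paper derives the left identity only at the end: it first obtains $R = S \oplus J$ with $S$ semisimple and $J = \msJ(R)$ by applying Wedderburn--Malcev inside the Dorroh extension $R' = \F_p \times R$ (Proposition~\ref{prop:R=S+J}), then shows $JR = \{0\}$ (Lemma~\ref{lem:JR=0}) and $J = SJ$ (Proposition~\ref{prop: K must be 0}); only then is $1_S$ a left identity of $R$.

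Second, and more seriously, the step you correctly flag as the ``main obstacle''---killing the right action so that $R$ is the trivial extension---is precisely Lemma~\ref{lem:JR=0}, and you have no mechanism for it. The paper's proof is not a covering-number argument: it views maximal left ideals as $R'$-submodules, uses $\msJ(R') = \{0\} \times \msJ(R)$ (Lemma~\ref{lem:radDorroh}) together with Nakayama's Lemma to show that $JR$ is contained in every maximal left ideal, and then concludes $JR = \{0\}$ from Lemma~\ref{lem:idealcoverrelations}. Your proposed alternatives (exhibiting a two-sided ideal $I$ with $\eta_\ell(R/I) \le \eta_\ell(R)$, or a smaller cover) would require explicit control of how a nontrivial right action alters the left-ideal lattice, which you do not supply. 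Note also a small wrinkle in your setup: with $N = \{a : ae = 0\}$ one gets $NS \subseteq S$, not $NS \subseteq N$, so $N$ is not an $S$-bimodule under this decomposition and the quantity to annihilate is really $NR$; the Jacobson-radical decomposition sidesteps this awkwardness entirely.
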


Observe that it suffices to prove (1), since the proof of (2) follows by passing to the opposite ring. Moreover, if $R$ is the ring of Theorem \ref{thm:main}(1) and $S = M_n(\F_q)$, then $R$ is a left $S$-module. Under this identification, the left $S$-submodules of $R$ are exactly the left ideals of $R$ (see Lemma \ref{lem: ideals and modules}). Thus, we have produced the first example of a module over a noncommutative ring with covering number $\frac{q^{n+1}-1}{q-1}$.

\begin{cor}\label{cor: module coverings}
For every prime power $q$ and integer $n \ge 1$, there exists a left (resp.\ right) $M_n(\F_q)$-module $M$ such that the covering number of $M$ by left (resp.\ right) $M_n(\F_q)$-submodules is $\frac{q^{n+1}-1}{q-1}$.
\end{cor}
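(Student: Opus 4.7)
The plan is to take the ring $R$ from Theorem \ref{thm:main}(1) itself as the desired left $S$-module, where $S=M_n(\F_q)$. The $S$-module structure on $R$ is given by block matrix multiplication: for $B \in S$ and $\begin{pmatrix} A & v \\ 0 & 0 \end{pmatrix} \in R$, set $B \cdot \begin{pmatrix} A & v \\ 0 & 0 \end{pmatrix} = \begin{pmatrix} BA & Bv \\ 0 & 0 \end{pmatrix}$. Equivalently, $R \cong M_n(\F_q) \oplus \F_q^n$ as a left $S$-module.

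The crucial point, which the paper attributes to Lemma \ref{lem: ideals and modules}, is that the left $S$-submodules of $R$ coincide with the left ideals of $R$. This is natural once one notices that for any element $\begin{pmatrix} B & w \\ 0 & 0 \end{pmatrix}$ of $R$, the left-multiplication action on $\begin{pmatrix} A & v \\ 0 & 0 \end{pmatrix}$ produces $\begin{pmatrix} BA & Bv \\ 0 & 0 \end{pmatrix}$; that is, left multiplication by elements of $R$ factors through the $S$-action by reading off only the upper-left block. Hence an additive subgroup of $R$ is closed under left multiplication by $R$ if and only if it is closed under the $S$-action, giving the identification of left ideals with left $S$-submodules.

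With this identification in hand, a collection of proper left $S$-submodules covers $R$ precisely when the same collection, viewed as proper left ideals of $R$, covers $R$. Therefore the covering number of $R$ as a left $S$-module equals $\eta_\ell(R)$, which Theorem \ref{thm:main}(1) evaluates to $\frac{q^{n+1}-1}{q-1}$. The right-module statement follows by the same argument applied to the ring of Theorem \ref{thm:main}(2) (alternatively, by passing to the opposite ring and using $M_n(\F_q)^{\mathrm{op}} \cong M_n(\F_q)$ via transpose).

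There is no genuine obstacle: the entire corollary is a packaging of Theorem \ref{thm:main} once Lemma \ref{lem: ideals and modules} is invoked to translate between left ideals of $R$ and left $S$-submodules of $R$. I would therefore expect the proof in the paper to be essentially a one-sentence citation of these two results.
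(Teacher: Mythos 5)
Your proposal is correct and matches the paper's approach exactly: the authors take the ring $R$ of Theorem \ref{thm:main}(1) as the left $S$-module, invoke Lemma \ref{lem: ideals and modules} to identify left ideals of $R$ with left $S$-submodules, and read off the covering number $\frac{q^{n+1}-1}{q-1}$ from Theorem \ref{thm:main}(1), with the right-module case handled via the opposite ring. Your observation that left multiplication by $R$ factors through the $S$-action (because only the upper-left block of the multiplier matters) is precisely the content of Lemma \ref{lem: ideals and modules}, which the paper proves using the left identity $e$ via $r\ell = (ere)\ell$.
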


This generalizes the known results on covers of modules over commutative rings. In \cite[Thm.\ 2.2]{Khare_Tikaradze_2022}, it is shown that if $R$ is a commutative unital ring and $M$ is an $R$-module that admits a finite cover by proper $R$-submodules, then the covering number of $M$ is $q+1$ for some prime power $q$. We are able to generalize these results to modules over noncommutative rings, and prove that the covering numbers obtained in Corollary \ref{cor: module coverings} are the only possible finite covering numbers for modules.

\begin{thm}\label{thm: all module covers}
Let $R$ be a unital ring and let $M$ be a left (resp.\ right) $R$-module that admits a finite cover by proper left (resp.\ right) $R$-submodules. Then, there exists some prime power $q$ and integer $n \ge 1$ such that the covering number of $M$ by left (resp.\ right) $R$-modules is $\tfrac{q^{n+1}-1}{q-1}$.
\end{thm}

\subsection*{Outline}
In Section \ref{sec: prelim}, we review how to embed a nonunital ring into a unital ring, and use this embedding to describe the structure of finite nonunital rings in characteristic $p$. In Section \ref{sec: reduction}, we develop basic properties of $\eta_\ell$- and $\eta$-elementary rings. We prove Theorem \ref{thm:two-sided}, and build towards Proposition \ref{prop:structR}, which shows that a noncommutative $\eta_\ell$-elementary ring $R$ admits a decomposition $R=S \oplus J$, where $S$ is a finite unital semisimple ring of characteristic $p$ and $J$ is the Jacobson radical of $R$. The multiplication on $R$ is given by $(s_1+j_1)(s_2+j_2) = s_1s_2 + s_1j_2$, so that
\begin{equation}\label{eq: R matrix form}
R \cong \left\{\begin{pmatrix}
                    s & j \\
                    0 & 0
                   \end{pmatrix} : s \in S \text{ and } j \in J \right\}.
\end{equation}
To prove Theorem \ref{thm:main}(1), we show that $S \cong M_n(\F_q)$ and $J \cong \F_q^n$ for some prime power $q$ and integer $n \ge 1$. In Section \ref{sec: new numbers}, we show that when $S$ and $J$ have these forms, the ring $R$ in \eqref{eq: R matrix form} is $\eta_\ell$-elementary and $\eta_\ell(R)=\frac{q^{n+1}-1}{q-1}$. In Section \ref{sec: structure}, we prove that these are the only noncommutative $\eta_\ell$-elementary rings. In Section \ref{sec: modules}, we close the paper with the proof of Theorem \ref{thm: all module covers}.

\section{Preliminaries} \label{sec: prelim}

It is well known that a nonunital ring $R$ of characteristic $p$ can be embedded into a unital ring $R'$: let $R' := \F_p \times R$ with multiplication given by the rule
 \[ (n_1, r_1)(n_2, r_2) := (n_1n_2, n_1r_2 + n_2r_1 + r_1r_2).\]
Notice that, for any $(n, r) \in R'$, $(1,0)(n,r) = (n,r)(1,0) = (n,r)$, so $R'$ has a multiplicative identity, and there is a natural identification of $R$ with $\{0\} \times R \subseteq R'$, so we may view $R$ as a subring of $R'$.  Such an extension $R'$ is sometimes called the \emph{Dorroh extension} of $R$, although this name is sometimes reserved for the extension $\Z \times R$ with analogous multiplication. Throughout this paper, $R'$ will always refer to the extension of $R$ defined above.

We prove one basic result that will be used later.

\begin{lem}
 \label{lem:idealsofDorroh}
If $I$ is any two-sided ideal of $R$, then $\{0\} \times I$ is a two-sided ideal of $R'$.  In particular, $\{0\} \times R$ is a two-sided ideal of $R'$.
\end{lem}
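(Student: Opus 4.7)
The proof should be a direct verification, so the plan is simply to unpack the definitions of the multiplication on $R'$ and of a two-sided ideal, and check closure.

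First I would verify that $\{0\} \times I$ is an additive subgroup of $R'$. This is immediate from the coordinate-wise definition of addition on $R' = \F_p \times R$ and the fact that $I$ is an additive subgroup of $R$.

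Next I would verify the absorption property. Take an arbitrary $(n, r) \in R'$ and $(0, i) \in \{0\} \times I$. The multiplication rule gives
\[
(n, r)(0, i) = (n \cdot 0,\; n \cdot i + r \cdot 0 + r \cdot i) = (0,\; ni + ri).
\]
Here $ni$ denotes the $n$-fold additive sum of $i$ in $R$ (since $n \in \F_p$ acts on $R$ via the characteristic), which lies in $I$ because $I$ is closed under addition, and $ri \in I$ because $I$ is a left ideal of $R$. Hence $ni + ri \in I$, so the product lies in $\{0\} \times I$. The computation for $(0,i)(n,r)$ is symmetric and uses that $I$ is a right ideal.

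The ``in particular'' statement is then just the special case $I = R$, which is a two-sided ideal of itself. The only subtle point worth flagging is the interpretation of the term $n \cdot i$ (where $n \in \F_p$ and $i \in R$) appearing in the multiplication on the Dorroh-style extension, but since $R$ has characteristic $p$ this scalar action is well-defined and preserves $I$. I do not anticipate any real obstacle here; the statement is essentially a bookkeeping lemma confirming that the embedding $R \hookrightarrow R'$ is an ideal embedding, which is exactly what one needs later in order to freely translate between ideals of $R$ and of its unital extension.
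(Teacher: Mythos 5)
Your proof is correct and takes the same direct-verification route as the paper: unpack the multiplication rule, observe that $(n,r)(0,i) = (0, ni+ri)$ and $(0,i)(n,r) = (0, ni+ir)$ both land in $\{0\} \times I$, and conclude. Nothing to add.
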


\begin{proof}
 For any $(n, r) \in R'$ and $(0, i) \in \{0\} \times I$, we have $(n,r) \cdot (0,i) = (0, ni + ri)$ and $(0,i) \cdot (n,r) = (0, ni + ir)$, both of which are in $\{0\} \times I$. The result follows.
\end{proof}

If $R$ is finite, then the structure of $R'$ is determined by the Wedderburn-Malcev Theorem (sometimes also called the Wedderburn Principal Theorem); see e.g. \cite[Sec.\ 11.6, Cor.\ p.\ 211]{Pierce_1982} or \cite[Thm.\ VIII.28]{McDonald_1974}. To describe the theorem, we need to define the Jacobson radical of a ring. There are a number of common equivalent definitions for the Jacobson radical of a unital ring. For a nonunital ring, the definition is more obscure. We follow the approach taken in \cite[Sec.\ 4]{Lam_1991}.

If we define the operation $\circ$ on a (not necessarily unital) ring $T$ by $a \circ b \colonequals a + b - ab$, then $(T, \circ)$ is a monoid with $0$ as the identity element. An element of $T$ is called \emph{left (\textnormal{resp.}\ right) quasi-regular} if it has a left (resp.\ right) inverse in the monoid $(T, \circ)$. A subset $I \subseteq T$ is called \emph{left (\textnormal{resp.}\ right) quasi-regular} if every element of $I$ is left (resp.\ right) quasi-regular. As in \cite[Sec.\ 4, Exercise 4]{Lam_1991}, we have the following definition.

\begin{defn} 
The \emph{Jacobson radical} of a (not necessarily unital) ring $T$ is
\[ \msJ(T) \colonequals \{a \in T : Ta \text{ is left quasi-regular}\}.\]
\end{defn}

\begin{rem}
Note that $\msJ(T)$ is a two-sided ideal containing all left (resp.\ right) quasi-regular  ideals of $T$. In particular, if $T$ has a multiplicative identity, then $\msJ(T)$ is the intersection of all maximal left (resp.\ right) ideals of $T$, which agrees with the usual characterizations of the Jacobson radical of a unital ring.
\end{rem}

\begin{thm}\label{thm:Wedderburn} (Wedderburn-Malcev Theorem)
Let $T$ be a finite unital ring of characteristic $p$. Then, there exists an $\F_p$-subalgebra $S$ of $T$ such that $T = S \oplus \msJ(T)$, and $S \cong T/\msJ(T)$ as $\F_p$-algebras. The ring $S$ is semisimple: for some $t \ge 1$, $S \cong \prod_{i=1}^t M_{n_i}(\F_{p^{d_i}})$, where each integer $d_i,n_i \ge 1$.
\end{thm}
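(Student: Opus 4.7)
The plan is to decompose the proof into three parts: show that $J := \msJ(T)$ is nilpotent; identify the structure of $\bar T := T/J$; and lift this structure back into $T$. Since $T$ is finite, the descending chain $J \supseteq J^2 \supseteq \cdots$ stabilizes, so $J^k = J^{k+1}$ for some $k$; because $J$ is quasi-regular, Nakayama's lemma forces $J^k = 0$, so $J$ is nilpotent. By the definition of the Jacobson radical, $\bar T$ is a finite semisimple ring; the Wedderburn--Artin theorem then gives $\bar T \cong \prod_{i=1}^t M_{n_i}(D_i)$ with each $D_i$ a finite division ring, and Wedderburn's little theorem forces each $D_i$ to be a finite field of characteristic $p$, hence isomorphic to $\F_{p^{d_i}}$. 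This yields the stated form of $S$ as soon as we exhibit such a subalgebra inside $T$.

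The heart of the argument is constructing the $\F_p$-subalgebra $S$. Fix the standard matrix units $\bar e^{(i)}_{jk}$ in each factor $M_{n_i}(\F_{p^{d_i}})$ of $\bar T$. Using the standard inductive refinement modulo powers of the nilpotent ideal $J$ (if $\bar x^2 = \bar x$ and $x^2 - x \in J^m$, replace $x$ by a suitable polynomial in $x$ to improve the congruence to $J^{2m}$), lift the diagonal orthogonal idempotents $\bar e^{(i)}_{jj}$ to orthogonal idempotents $e^{(i)}_{jj} \in T$. Next, lift the off-diagonal matrix units into the Peirce corners $e^{(i)}_{jj} T e^{(i)}_{kk}$ and rescale so that the matrix-unit relations hold. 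Finally, lift a primitive generator of each subfield $\F_{p^{d_i}}$ inside the local subring $e^{(i)}_{11} T e^{(i)}_{11}$; this is possible because finite fields are perfect, so the relevant minimal polynomial is separable and Hensel's lemma applies modulo the nilpotent ideal. The $\F_p$-subalgebra $S$ generated by all these lifts then maps isomorphically onto $\bar T$, and a dimension count over $\F_p$ gives $T = S \oplus J$.

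The main obstacle is verifying that these successive lifts can be made compatibly so that $S$ really does form a subalgebra abstractly isomorphic to $\bar T$, rather than merely surjecting onto it. The cleanest way to dispatch this is the cohomological formulation: the obstruction to splitting the projection $T \twoheadrightarrow \bar T$ lies in $H^2(\bar T, J)$, which vanishes whenever $\bar T$ is a separable $\F_p$-algebra. Separability holds automatically here since $\F_p$ is perfect and $\bar T$ is a finite-dimensional semisimple $\F_p$-algebra, so the splitting exists and the theorem follows. Given that this is a classical result with standard references (Pierce, McDonald), I would in practice simply invoke Wedderburn--Malcev rather than reprove it.
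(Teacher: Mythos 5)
The paper does not prove this theorem: it is stated as the classical Wedderburn--Malcev (Wedderburn Principal) Theorem and cited to Pierce \cite[Sec.\ 11.6, Cor.\ p.\ 211]{Pierce_1982} and McDonald \cite[Thm.\ VIII.28]{McDonald_1974}, so your closing remark that you would simply invoke the result is exactly what the paper does. Your sketch (nilpotence of $\msJ(T)$ via the stabilizing chain and Nakayama, Wedderburn--Artin plus Wedderburn's little theorem for $T/\msJ(T)$, and then either an idempotent/matrix-unit/field-generator lifting modulo the nilpotent radical or, more cleanly, vanishing of the Hochschild obstruction $H^2$ because $T/\msJ(T)$ is separable over the perfect field $\F_p$) is a correct outline of the standard proof and contains no step that would fail, though a complete write-up of the constructive lifting would require more care than the sketch indicates.
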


An immediate consequence of the definition of $R'$ is that a subset $I \subseteq R$ is a left ideal of $R$ if and only if it is a left $R'$-module.  When viewing left ideals in this light, the following classical result will be useful:

\begin{lem} (Nakayama's Lemma) \cite[Lem.\ 4.22]{Lam_1991}
 \label{lem:Nakayama}
 Let $T$ be a unital ring and $I$ be a left ideal of $T$. Then, $I \subseteq \msJ(T)$ if and only if for any finitely generated left $T$-module $M$ and $N$ a submodule of $M$, $M = N + IM$ implies that $M = N$.
\end{lem}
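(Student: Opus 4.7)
The plan is to run the classical proof of Nakayama's Lemma, leveraging the following consequence of the nonunital definition of $\msJ(T)$: in the unital ring $T$, an element $x$ is left quasi-regular in the monoid $(T,\circ)$ if and only if $1-x$ has a left inverse, so $a \in \msJ(T)$ is equivalent to $1-ba$ being left-invertible for every $b \in T$. With that translation in hand, the forward implication is a minimal-generating-set argument and the converse is an explicit cyclic-module construction.

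For the forward direction, assume $I \subseteq \msJ(T)$ and $M = N + IM$ with $M$ finitely generated and $N \subseteq M$ a submodule. Passing to $\overline{M} = M/N$ one obtains $I\overline{M} = \overline{M}$, reducing the problem to showing that a finitely generated module with $IM = M$ must vanish. Fix a generating set $m_1,\dots,m_k$ of smallest possible cardinality. Writing $m_k = \sum_{i=1}^k a_i m_i$ with $a_i \in I$ yields $(1 - a_k) m_k = \sum_{i<k} a_i m_i$; since $a_k \in \msJ(T)$, the element $1 - a_k$ has a left inverse $u$, so left-multiplying by $u$ exhibits $m_k$ as a $T$-linear combination of $m_1, \dots, m_{k-1}$, contradicting the minimality of $k$ unless $k=0$.

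For the converse, assume the module-theoretic implication and fix $a \in I$; we must show $a \in \msJ(T)$, i.e.\ that every $ba \in Ta$ is left quasi-regular. Because $I$ is a left ideal, $ba$ lies in $I$, so it suffices to prove that $1-c$ has a left inverse for every $c \in I$. Consider the cyclic left $T$-module $M = T/T(1-c)$ with generator $\bar{1}$. Since $(1-c)\bar{1} = 0$, we have $\bar{1} = c\bar{1}$, and thus $M = T\bar{1} = Tc\bar{1} \subseteq IM$, giving $M = IM$. Applying the hypothesis with $N = 0$ forces $M = 0$, i.e.\ $1 \in T(1-c)$, which produces the required left inverse of $1 - c$.

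The only real subtlety is the dictionary between the nonunital definition of $\msJ(T)$ via quasi-regular elements and the unital characterization via left-invertibility of $1 - x$; once that correspondence is checked, both halves collapse to standard one-paragraph arguments, matching the treatment in \cite{Lam_1991}.
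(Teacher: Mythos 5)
The paper cites Nakayama's Lemma from \cite{Lam_1991} without giving a proof, so there is no in-text argument to compare against; your reconstruction is the standard one and is essentially correct. The dictionary you set up at the outset---that $x$ is left quasi-regular if and only if $1-x$ is left-invertible, via $(1-y)(1-x)=1$ when $y\circ x=0$---is exactly what is needed to translate the paper's nonunital definition of $\msJ(T)$ into the form where the familiar argument applies, and the converse via the cyclic module $M=T/T(1-c)$ is correct as written.

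One small slip in the forward direction: after reducing to $\overline{M}=I\overline{M}$ with $\overline{M}$ generated by $m_1,\dots,m_k$, you assert $m_k=\sum_{i}a_i m_i$ with $a_i\in I$. Since $I$ is only a \emph{left} ideal, unwinding $m_k\in I\overline{M}$ only puts the coefficients in $IT$, which need not lie in $I$. The repair is immediate---$IT\subseteq\msJ(T)T\subseteq\msJ(T)$ because $\msJ(T)$ is a two-sided ideal (equivalently, pass from $\overline{M}=I\overline{M}$ to $\overline{M}=\msJ(T)\overline{M}$ first)---so $a_k\in\msJ(T)$ and the left-invertibility of $1-a_k$ goes through exactly as you use it; but the literal claim $a_i\in I$ is not justified and should be stated with $\msJ(T)$ in its place.
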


To use Nakayama's Lemma, we need to know about the Jacobson radical of $R'$.  As it turns out, it is simply the Jacobson radical of $R$.

\begin{lem}
 \label{lem:radDorroh}
$\msJ(R') = \{0\} \times \msJ(R)$.
\end{lem}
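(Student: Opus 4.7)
The plan is to establish the two inclusions $\msJ(R')\subseteq\{0\}\times\msJ(R)$ and $\{0\}\times\msJ(R)\subseteq\msJ(R')$ by combining a projection argument (which pins down the first coordinate) with a direct translation between quasi-regularity in $R'$ and in $R$ (which handles the second).

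For the first step, I would observe that the projection $\pi\colon R'\to\F_p$, $(n,r)\mapsto n$, is a surjective homomorphism of unital rings with kernel $\{0\}\times R$. Since surjective homomorphisms of unital rings send the Jacobson radical into the Jacobson radical, and $\msJ(\F_p)=\{0\}$ because $\F_p$ is a field, this yields $\msJ(R')\subseteq\{0\}\times R$. Hence it remains only to characterize which $(0,r)$ belong to $\msJ(R')$.

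For that, I would record the short computation
\[
(m,t)\circ(0,a) = (m,\,t+a-ma-ta),
\]
which shows that a left quasi-inverse of $(0,a)$ in $R'$ is forced to have first coordinate $0$ and reduces the second coordinate equation to $t\circ a=0$ in $R$. Thus $(0,a)$ is left quasi-regular in $R'$ if and only if $a$ is left quasi-regular in $R$. Applying this to each element $(n,s)(0,r)=(0,nr+sr)$ of $R'\cdot (0,r)$ translates $(0,r)\in\msJ(R')$ into the condition that $nr+sr$ is left quasi-regular in $R$ for every $n\in\F_p$ and $s\in R$. Taking $n=0$ recovers precisely $r\in\msJ(R)$, giving one direction. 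For the reverse, I would note that $nr+sr$ lies in the two-sided ideal $\msJ(R)$, which is closed under integer multiples and left multiplication by $R$, and then invoke that every element of $\msJ(R)$ is automatically left quasi-regular.

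The main obstacle is this last fact. The paper's definition only says that $r\in\msJ(R)$ means $Rr$ is left quasi-regular, which a priori says nothing about $r$ itself when $R$ is nonunital. However, as recorded in the remark following the definition (and as proved in Lam), $\msJ(R)$ is itself the largest left quasi-regular ideal of $R$, so every one of its elements is left quasi-regular; with this cited, the argument closes.
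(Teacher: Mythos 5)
Your proposal is correct and follows essentially the same route as the paper: compute $(n,r)(0,x)=(0,nr+rx)$, show a left quasi-inverse in $R'$ must have first coordinate $0$ (your $(m,t)\circ(0,a)$ computation is precisely what the paper does term-by-term), and translate quasi-regularity in $R'$ into quasi-regularity in $R$. Your first step (projecting to $\F_p$ and using that surjections preserve the radical) is just a minor repackaging of the paper's observation that $\{0\}\times R$ is a maximal ideal of $R'$, and you are right to flag that the containment $\{0\}\times\msJ(R)\subseteq\msJ(R')$ needs the fact that every element of $\msJ(R)$ is itself left quasi-regular even when $R$ is nonunital --- the paper invokes this silently, so your citation of the ``largest left quasi-regular ideal'' characterization from Lam is a welcome clarification rather than a detour.
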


\begin{proof}
The following proof is adapted from and analogous to that of \cite{rschwieb_2016}.  First, since $\{0\} \times R$ is a maximal ideal of $R'$ by Lemma \ref{lem:idealsofDorroh}, $\msJ(R') \subseteq \{0\} \times R$.

Let $(0,x) \in \{0\} \times \msJ(R)$.  By Lemma \ref{lem:idealsofDorroh}, $\{0\} \times \msJ(R)$ is a two-sided ideal of $R'$.  For any $(n,r) \in R'$, we have $(n,r)(0,x) = (0, nx + rx) \in \{0\} \times \msJ(R)$.  Since $nx + rx \in \msJ(R)$, it is left quasi-regular in $R$ and hence left quasi-regular in $R'$. Indeed, if $y$ is the left inverse of $nx + rx$ in $(R, \circ)$, then $(0,y)$ is the left inverse of $(n,r)(0,x)$ in $(R', \circ)$.  Thus, $\{0\} \times \msJ(R) \subseteq \msJ(R')$.

Now, let $(0, x) \in \msJ(R')$. Then, for every $(n,r) \in R'$, there exists $(m,y) \in R'$ such that $(n,r)(0,x) + (m,y) - (m,y)(n,r)(0,x) = (0,0)$. We have
\begin{align*}
(m,y) &= (m,y)(n,r)(0,x) - (n,r)(0,x) \\
&= (0, mnx + mrx + nyx + yrx - nx - rx) \in \{0\} \times R.
\end{align*}
This gives $m = 0$. So, for any $r \in R$, we may choose $n  = 0$ and then $(0,y) \circ (0,r)(0,x)=0$ in $R'$. This implies $rx + y - yrx = 0$ in $R$, i.e., $rx$ is left quasi-regular and $x \in \msJ(R)$.  The result follows.
\end{proof}

We close this section by demonstrating a form of Theorem \ref{thm:Wedderburn} that holds for finite nonunital rings in characteristic $p$.

\begin{prop}\label{prop:R=S+J}
Let $R$ be a finite (not necessarily unital) ring of characteristic $p$ and Jacobson radical $J$. Then, there exists a subring $S$ of $R$ such that $R = S \oplus J$ and $S$ is either $\{0\}$ or semisimple.
\end{prop}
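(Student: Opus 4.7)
The plan is to lift the Wedderburn--Malcev decomposition from the Dorroh extension $R'$ down to $R$ itself. Concretely, view $R \subseteq R' = \F_p \times R$, which is a finite unital $\F_p$-algebra of characteristic $p$. Applying Theorem \ref{thm:Wedderburn} to $R'$, we obtain an $\F_p$-subalgebra $S'$ of $R'$, containing the identity $(1,0)$, with $R' = S' \oplus \msJ(R')$ and $S' \cong R'/\msJ(R')$ semisimple. By Lemma \ref{lem:radDorroh}, $\msJ(R') = \{0\} \times J$, so this reads
\[ R' = S' \oplus (\{0\} \times J). \]

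Next I would extract the ``non-identity part'' of $S'$. The projection $\pi \colon R' \to \F_p$, $(n,r) \mapsto n$, is a surjective ring homomorphism with kernel $\{0\} \times R$ (a two-sided ideal by Lemma \ref{lem:idealsofDorroh}). Since $(1,0) \in S'$, the restriction $\pi|_{S'}$ is surjective, and its kernel $I' := S' \cap (\{0\} \times R)$ is a two-sided ideal of $S'$. Because $(1,0) \notin \{0\} \times R$, we get the internal decomposition
\[ S' = \F_p \cdot (1,0) \,\oplus\, I' \]
as $\F_p$-vector spaces. Under the identification $\{0\} \times R \leftrightarrow R$, the set $I'$ corresponds to a subring $S$ of $R$ (closure under multiplication is inherited from $S'$, since $I'$ is an ideal of $S'$).

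To finish the decomposition of $R$, I would combine the two displays above:
\[ R' = \F_p \cdot (1,0) \,\oplus\, (\{0\} \times S) \,\oplus\, (\{0\} \times J). \]
Projecting an arbitrary $(0,r) \in \{0\} \times R$ through this decomposition, the $\F_p \cdot (1,0)$ component must vanish (it is the only one with nonzero first coordinate), so $r$ splits uniquely as $s + j$ with $s \in S$ and $j \in J$. Uniqueness also gives $S \cap J = \{0\}$, so $R = S \oplus J$. Finally, by Theorem \ref{thm:Wedderburn}, $S' \cong \prod_{i=1}^{t} M_{n_i}(\F_{p^{d_i}})$, and any two-sided ideal of such a product is itself a product of some of the simple factors; hence $I'$, and so $S$, is either $\{0\}$ or semisimple.

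The only real subtlety is Step~2 above: one must be sure to strip the unit $(1,0)$ off of $S'$ in a way that is both a ring decomposition and compatible with the radical decomposition of $R'$. The argument via the kernel of $\pi|_{S'}$ handles this cleanly; everything else is bookkeeping built on Theorem \ref{thm:Wedderburn} and Lemmas \ref{lem:idealsofDorroh} and \ref{lem:radDorroh}.
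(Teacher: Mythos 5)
Your proof is correct and follows essentially the same route as the paper's: both pass to the Dorroh extension $R'$, apply Wedderburn--Malcev and the identification $\msJ(R') = \{0\}\times\msJ(R)$, and then take $S$ to be (the image in $R$ of) $S' \cap (\{0\}\times R)$, which is a two-sided ideal of the semisimple ring $S'$ and hence zero or semisimple. Your presentation via the projection $\pi\colon R'\to\F_p$ is just a cleaner way of exhibiting that same intersection as a kernel; the substance is identical.
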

\begin{proof}
By Theorem \ref{thm:Wedderburn} and Lemma \ref{lem:radDorroh}, there exists a semisimple subalgebra $S'$ of $R'$ such that
 \[R' = S' \oplus \msJ(R') = S' \oplus (\{0\} \times J)\]
 and
 \[ S' \cong R'/\msJ(R') = R'/(\{0\} \times J).\]
 Since $\{0\} \times R$ is an ideal of $R'$ by Lemma \ref{lem:idealsofDorroh}, we have
 \[R/J \cong (\{0\} \times R)/(\{0\} \times J),\]
so $R/J$ is isomorphic to an ideal of the semisimple ring $S'$. By \cite[Thm.\ VIII.5]{McDonald_1974}, $R/J$ is either $\{0\}$ or semisimple.  Since $S' \cap (\{0\} \times J) = \{0_{R'}\}$, there exists an ideal $\{0\} \times S$ of $S'$ such that
 \[\{0\} \times R = (\{0\} \times S) \oplus \msJ(R') = (\{0\} \times  S) \oplus (\{0\} \times J),\]
 i.e., $R = S \oplus J$, where $S$ is either $\{0\}$ or semisimple.
\end{proof}

\begin{rem}
Note that while $S$ (when nonzero) is isomorphic to a semisimple---hence unital---ring, we cannot immediately conclude that the action of $S$ on $J$ is nontrivial. That is, in the decomposition of Proposition \ref{prop:R=S+J}, it is possible that $SJ = JS =\{0\}$. This differs from the behavior of $S$ and $J$ in the decomposition $T=S \oplus J$ of Theorem \ref{thm:Wedderburn}, in which $1_S = 1_T$ and $SJ =JS = J$.
\end{rem}

\section{Basic properties and reduction theorems} \label{sec: reduction}

In this section, we establish some results for minimal covers by left and two-sided ideals, for the structure of $\eta_\ell$-elementary and $\eta$-elementary rings. Note that any statement regarding covers by left ideals has a natural analogue for right ideals. After proving Theorem \ref{thm:two-sided}, which classifies all $\eta$-elementary rings, we focus solely on $\eta_\ell$-elementary rings.

We begin with a variation on \cite[Thm.\ 2.2]{Werner_2015}, which gives a sufficient condition for the covering number of a direct product of rings to be equal to the covering number of one of the direct factors.

\begin{lem}\label{lem: direct products of rings}
Let $R$ be a finite ring such that $R = \prod_{i=1}^t R_i$, where each $R_i$ is a ring. Assume that $*$-ideals of $R$ respect this decomposition. That is, for each $*$-ideal $L \subseteq R$, there exist $*$-ideals $L_i \subseteq R_i$ such that $L = \prod_{i=1}^t L_i$.
\begin{enumerate}[(1)]
\item Each maximal $*$-ideal $M$ of $R$ has the form
\begin{equation*}
M = R_1 \times \cdots \times R_{i-1} \times M_i \times R_{i+1} \times \cdots \times R_t
\end{equation*}
for some $1 \le i \le t$, where $M_i$ is a maximal $*$-ideal of $R_i$.

\item $\eta_*(R) = \min_{1 \le i \le t}(\eta_*(R_i))$.

\item If $R$ is $\eta_*$-elementary, then $t=1$.
\end{enumerate}
\end{lem}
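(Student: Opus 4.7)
The plan is to use the hypothesis that every $*$-ideal of $R$ factors through the direct product decomposition, then bootstrap from a structure theorem for maximal $*$-ideals to the covering-number statement, and finally to elementariness.

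For part~(1), I would take a maximal $*$-ideal $M$ and write it as $M = \prod_{i=1}^t L_i$ using the hypothesis. Properness forces at least one $L_i \subsetneq R_i$. If two coordinates $j \ne k$ both satisfied $L_j \subsetneq R_j$ and $L_k \subsetneq R_k$, then replacing $L_j$ by $R_j$ would give a strictly larger $*$-ideal that is still proper (because $L_k \ne R_k$), contradicting maximality. So exactly one coordinate $i$ has $L_i$ proper, and maximality of $M$ forces $L_i$ to be maximal in $R_i$.

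For part~(2), I would first reduce to covers by maximal $*$-ideals; since $R$ is finite, every proper $*$-ideal is contained in a maximal one, so a minimal cover may be assumed to consist of maximal $*$-ideals. By~(1), such a cover $R = \bigcup_{j=1}^m N_j$ sorts by coordinate: for each $i$ it contributes a subcollection $\mathcal{M}_i$ of maximal $*$-ideals of $R_i$, with $\sum_i |\mathcal{M}_i| = m$. The key combinatorial claim is that the $N_j$'s cover $R$ if and only if, for some single coordinate $i$, the family $\mathcal{M}_i$ already covers $R_i$. The nontrivial direction is the contrapositive: if for each $i$ some element $r_i \in R_i \setminus \bigcup \mathcal{M}_i$ exists, then the tuple $(r_1,\ldots,r_t)$ avoids every $N_j$. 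This yields $\eta_*(R) \ge \eta_*(R_{i^\ast})$ for some $i^\ast$, hence $\eta_*(R) \ge \min_i \eta_*(R_i)$. The reverse inequality is routine: given any cover of a single $R_{i_0}$ by maximal $*$-ideals $M_1,\ldots,M_m$, lifting to $R_1 \times \cdots \times M_k \times \cdots \times R_t$ (with $M_k$ in coordinate $i_0$) produces a cover of $R$ of the same size.

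For part~(3), I would assume $R$ is $\eta_*$-elementary (and, without loss of generality, that every $R_i$ is nontrivial, since zero factors may be dropped from the product). By~(2) pick $i_0$ with $\eta_*(R) = \eta_*(R_{i_0})$, and let $I = R_1 \times \cdots \times \{0\} \times \cdots \times R_t$ with the zero in coordinate $i_0$. This is a two-sided ideal of $R$ with $R/I \cong R_{i_0}$, so $\eta_*(R/I) = \eta_*(R_{i_0}) = \eta_*(R)$. The $\eta_*$-elementary hypothesis forces $I = 0$, whence $t = 1$.

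The main obstacle I anticipate is the bookkeeping in~(2): ruling out any ``crossover'' effect where several coordinates collectively, but none individually, cover $R$. Once the one-coordinate principle is in hand, (1) and~(3) are essentially formal consequences.
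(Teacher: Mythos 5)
Your proof is correct and follows essentially the same approach as the paper. The only difference is that the paper delegates part (2) to a citation of \cite[Thm.\ 2.2]{Werner_2015} (with ``maximal subring'' replaced by ``maximal $*$-ideal''), whereas you spell out the combinatorial covering argument in full; your reduction to maximal $*$-ideals and the observation that a coordinatewise-avoiding tuple escapes the whole cover is exactly the intended argument.
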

\begin{proof} 
(1) Let $M$ be a maximal $*$-ideal of $R$. By assumption, $M = \prod_{i=1}^t L_i$, where each $L_i$ is a $*$-ideal of $R_i$. If there exist two indices $j \ne k$ such that $L_j \ne R_j$ and $L_k \ne R_k$, then let $M' = \prod_{i=1}^t L_i'$, where $L_j' = R_j$ and $L_i'=L_i$ for $i \ne j$. We have $M \subsetneqq M' \subsetneqq R$, which contradicts the maximality of $M$. The result follows.

(2) This is identical to the proof of \cite[Thm.\ 2.2]{Werner_2015} with each use of ``maximal subring'' replaced with ``maximal $*$-ideal''.

(3) Suppose that $R$ is $\eta_*$-elementary, but $t \ge 2$. Then, each direct factor $R_i$ occurs as a residue ring $R/I$ of $R$ for some nonzero two-sided ideal $I$ of $R$. Since $R$ is $\eta_*$-elementary, $\eta_*(R) < \eta_*(R_i)$ for each $i$. This contradicts (2).
\end{proof}

\begin{lem}
 \label{lem:idealcoverrelations}
 Let $R$ be an $\eta_*$-elementary ring with minimal cover $L_1, \dots, L_m$ by $*$-ideals. If $I$ is a two-sided ideal of $R$ that is contained in $L_i$ for each $1 \le i \le m$, then $I = \{0\}$.
\end{lem}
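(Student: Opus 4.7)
The plan is to argue by contradiction using the quotient $R/I$ and the correspondence theorem. Suppose for contradiction that $I \neq \{0\}$. Since $R$ is $\eta_*$-elementary, the defining property of $\eta_*$-elementary rings gives $\eta_*(R) < \eta_*(R/I)$. I will derive a contradiction by exhibiting a cover of $R/I$ by at most $m = \eta_*(R)$ proper $*$-ideals.

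The key observation is the following. Because $I \subseteq L_i$ for every $i$ and $I$ is a two-sided ideal of $R$, each quotient $L_i/I$ is a well-defined $*$-ideal of $R/I$ under the correspondence theorem (a left, right, or two-sided ideal $L_i$ containing the two-sided ideal $I$ descends to an ideal of the same type in $R/I$). Moreover, each $L_i/I$ is proper in $R/I$: if $L_i/I = R/I$, then $L_i = R$, contradicting properness of $L_i$ in the original cover. Finally, if $r + I \in R/I$ is arbitrary, then $r \in L_i$ for some $i$ since $L_1, \dots, L_m$ covers $R$, and hence $r + I \in L_i/I$. Thus $L_1/I, \dots, L_m/I$ is a cover of $R/I$ by proper $*$-ideals, giving $\eta_*(R/I) \le m = \eta_*(R)$.

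This contradicts the $\eta_*$-elementary hypothesis, and so $I = \{0\}$, as required. There is no real obstacle here: the statement is essentially a direct unpacking of the definition of $\eta_*$-elementary combined with the standard lifting/descent of covers through a two-sided ideal quotient. The only minor point to verify carefully is that the quotients $L_i/I$ remain proper and of the correct type ($\ell$, $r$, or two-sided), which is immediate from the correspondence theorem since $I$ is assumed two-sided.
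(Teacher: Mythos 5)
Your proof is correct and follows essentially the same argument as the paper: pass the cover down to $R/I$ using the fact that each $L_i$ contains $I$, observe that the images remain proper $*$-ideals forming a cover, and conclude $\eta_*(R/I) \le m = \eta_*(R)$, contradicting the $\eta_*$-elementary hypothesis if $I \ne \{0\}$.
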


\begin{proof}
Assume that $I$ is contained in $L_i$ for each $i$. Then, each $L_i/(L_i \cap I) = L_i/I$ is a proper $*$-ideal of $R/I$, and $L_1/I, \dots, L_m/I$ form a cover of $R/I$. If $I \neq \{0\}$, then $\eta_*(R) = m \ge \eta_*(R/I)$, a contradiction to $R$ being $\eta_*$-elementary.
\end{proof}

\begin{lem}\label{lem:Rcharp}\mbox{}
\begin{enumerate}[(1)]
\item If $R$ admits a finite cover by $*$-ideals, then $R$ contains a two-sided ideal $I$ of finite index such that $\eta_*(R) = \eta_*(R/I)$.
\item If $R$ is an $\eta_*$-elementary ring, then $R$ is finite and has prime characteristic.
\end{enumerate}
\end{lem}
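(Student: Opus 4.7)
Plan for (1): Fix a minimal cover $L_1,\dots,L_m$ of $R$ by proper $*$-ideals. Since any minimal cover is irredundant, B.\,H.\ Neumann's theorem---applied to the additive group $(R,+)$ covered by the additive subgroups $L_i$---forces each $L_i$ to have finite index in $(R,+)$. For each $L_i$ I produce the largest two-sided ideal $I_i$ of $R$ contained in $L_i$: in the two-sided case set $I_i=L_i$; for $*=\eta_\ell$ set
\[
    I_i=\{r\in L_i : rR\subseteq L_i\},
\]
and dually for $*=\eta_r$. A direct check shows $I_i$ is a two-sided ideal of $R$, and the additive-group homomorphism
\[
    R\to(R/L_i)\oplus\mathrm{End}_\Z(R/L_i),\qquad r\mapsto(r+L_i,\lambda_r),
\]
where $\lambda_r$ denotes left multiplication by $r$ (well-defined because $L_i$ is a left ideal), has kernel exactly $I_i$ and a finite codomain, so $R/I_i$ is finite. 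Then $I\colonequals\bigcap_i I_i$ is a two-sided ideal of finite index in $R$ contained in every $L_i$, whence the images $L_i/I$ form a cover of $R/I$ by proper $*$-ideals giving $\eta_*(R/I)\le m$. The reverse inequality $\eta_*(R)\le\eta_*(R/I)$ is automatic by pulling back any cover of $R/I$ along $R\to R/I$.

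For (2), finiteness is immediate: (1) produces a two-sided ideal $I$ of finite index in $R$ with $\eta_*(R/I)=\eta_*(R)$, so $\eta_*$-elementarity forces $I=\{0\}$ and $R$ is finite. To rule out composite characteristic, suppose $n\colonequals\mathrm{char}(R)$ factors as $ab$ with $\gcd(a,b)=1$ and $a,b>1$. Bezout's identity yields a ring direct product decomposition $R=R_a\oplus R_b$, where $R_a=\{r\in R:ar=0\}$ and the analogously defined $R_b$ are nonzero two-sided ideals with $R_aR_b=R_bR_a=\{0\}$; another Bezout argument shows every $*$-ideal of $R$ splits compatibly as $L=(L\cap R_a)\oplus(L\cap R_b)$, so Lemma \ref{lem: direct products of rings}(3) contradicts $\eta_*$-elementarity. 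Otherwise $n=p^k$ with $k\ge 2$, and $pR$ is a proper nonzero two-sided ideal of $R$---proper because $pR=R$ iterates to $R=p^kR=\{0\}$, nonzero because $pR=\{0\}$ would force $\mathrm{char}(R)\mid p$. Projecting a minimal cover $L_1,\dots,L_m$ to $R/pR$ gives a cover by the images $(L_i+pR)/pR$, and each is proper: if $L_i+pR=R$, then multiplication by $p$ is a surjective, hence (on the finite abelian $p$-group $R/L_i$) bijective, endomorphism, contradicting $R/L_i\ne\{0\}$ being annihilated by $p^k$. Thus $\eta_*(R/pR)\le m=\eta_*(R)$, again contradicting $\eta_*$-elementarity, and $\mathrm{char}(R)$ is prime.

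The main technical subtlety lies in the core construction in (1): because $R$ is nonunital, the module-theoretic annihilator $\{r\in R:rR\subseteq L_i\}$ of $R/L_i$ need not be contained in $L_i$, and an intersection with $L_i$ is needed to produce a genuine two-sided ideal of $R$ simultaneously inside $L_i$ and of finite index in $R$.
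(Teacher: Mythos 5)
Your proof is correct, and it takes a more self-contained route than the paper while following the same broad outline.

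For part (1), where the paper appeals to a result of Lewin to obtain a finite-index two-sided ideal inside $\bigcap_i L_i$, you instead construct it directly: for each $L_i$ you form the bound $I_i = \{r \in L_i : rR \subseteq L_i\}$ (the largest two-sided ideal of $R$ inside $L_i$), verify it is two-sided, and show it has finite index by embedding $R/I_i$ into the finite group $(R/L_i) \oplus \mathrm{End}_{\Z}(R/L_i)$ via $r \mapsto (r+L_i,\lambda_r)$. Taking $I = \bigcap_i I_i$ then gives a finite-index two-sided ideal contained in every $L_i$, and the two-sided comparison of covering numbers finishes exactly as in the paper. Your closing remark about the nonunital subtlety---that $\{r : rR \subseteq L_i\}$ alone need not lie inside $L_i$, so the intersection with $L_i$ is essential---is a genuine point the paper sidesteps by citing Lewin. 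For part (2), you replace the paper's appeal to the classical decomposition of a finite ring into prime-power-order factors with an explicit Bezout argument producing $R = R_a \oplus R_b$, and then feed this into Lemma~\ref{lem: direct products of rings}(3) just as the paper does. To rule out characteristic $p^k$ with $k \ge 2$, the paper cites a lemma of Swartz--Werner asserting that $M + pR = R$ forces $M = R$, and then combines this with Lemma~\ref{lem:idealcoverrelations}; you instead reprove the key step inline via the surjectivity of multiplication-by-$p$ on the finite $p$-group $R/L_i$, showing directly that each $(L_i + pR)/pR$ is proper, so $\eta_*(R/pR) \le \eta_*(R)$ and $\eta_*$-elementarity forces $pR = \{0\}$. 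Both approaches are valid; yours trades external citations for short direct arguments, which makes the proof longer but independent of \cite{Lewin_1967} and \cite{Swartz_Werner_2021}.
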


\begin{proof}
(1) Suppose that $\eta_*(R) < \infty$ and $R=\bigcup_{i=1}^{\eta_*(R)} L_i$ is a minimal cover by $*$-ideals. By \cite[Lem.\ 4.1, 4.4]{Neumann_1954}, each $(L_i,+)$ has finite index in $(R,+)$, and by \cite[Lem.\ 1]{Lewin_1967}, their intersection contains a two-sided ideal $I$ of finite index in $R$, so $R/I$ is finite. Since the images of $L_i$ mod $I$ form a $*$-ideal cover of $R/I$, and conversely any $*$-ideal cover of $R/I$ can be lifted to a $*$-ideal cover of $R$, we have $\eta_*(R)=\eta_*(R/I)$. 

(2) Assume that $R$ is $\eta_*$-elementary. By (1), $R$ must be finite, because the condition $\eta_*(R)=\eta_*(R/I)$ forces $I=\{0\}$. It is well known that any finite (not necessarily unital) ring $R$ is isomorphic to a direct product of rings of prime power order, so $R=\prod_{i=1}^t R_i$ where $|R_i|=p_i^{d_i}$ for distinct primes $p_i$. It follows from Lemma \ref{lem: direct products of rings}(3) that $R$ has prime power order and hence has characteristic $p^d$ for some prime $p$ and integer $d \ge 1$.  We now argue as in the proof of \cite[Lem.\ 3.2(2), Prop.\ 3.3]{Swartz_Werner_2021}: $pR$ is a two-sided ideal of $R$, and, if $M$ is a maximal $*$-ideal of $R$, $M + pR$ is also a $*$-ideal.  Since $M \subseteq M + pR \subseteq R$, by the maximality of $M$, either $M + pR = M$ or $M + pR = R$.  However, by \cite[Lem.\ 3.2(1)]{Swartz_Werner_2021} (which holds for nonunital rings), $M + pR = R$ implies $M = R$, contradicting our assumptions.  Thus, $pR \subseteq M$ for all maximal $*$-ideals $M$.  In particular, if $L_1, \dots, L_m$ is a minimal cover of $R$ by $*$-ideals, without loss of generality we may assume each $L_i$ is a maximal $*$-ideal, and hence $pR \subseteq L_i$ for each $i$.  By Lemma \ref{lem:idealcoverrelations}, $pR = \{0\}$, and so $R$ has characteristic $p$.
\end{proof}

We can now begin to derive structural results on $R$.

\begin{lem}
 \label{lem:Jnot0}
 If $R$ is an $\eta_*$-elementary ring, then $\msJ(R) \neq \{0\}$.
\end{lem}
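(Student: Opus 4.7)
The plan is to argue by contradiction: assume $\msJ(R) = \{0\}$ and derive a contradiction with the coverability of $R$ by proper $*$-ideals, which must hold since $R$ is $\eta_*$-elementary.

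First, I would invoke Lemma \ref{lem:Rcharp}(2) to conclude that $R$ is a finite ring of prime characteristic $p$, so that Proposition \ref{prop:R=S+J} applies and produces a decomposition $R = S \oplus \msJ(R)$ in which $S$ is either $\{0\}$ or semisimple. Under the assumption $\msJ(R) = \{0\}$, this collapses to $R = S$.

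I would then split into two trivial cases. If $R = \{0\}$, then $R$ has no proper ideals at all, so it cannot be covered, contradicting the fact (recorded in the definition of $\eta_*$-elementary) that any $\eta_*$-elementary ring is coverable by $*$-ideals. Otherwise, $R = S$ is semisimple, and by Theorem \ref{thm:Wedderburn} it is therefore unital. As remarked in the introduction, any proper left, right, or two-sided ideal of a unital ring avoids the multiplicative identity, so the union of all proper $*$-ideals cannot be $R$. This again contradicts $R$ being coverable, completing the argument.

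There is essentially no obstacle here beyond recognizing that the semisimple summand $S$ in Proposition \ref{prop:R=S+J} is genuinely unital (its identity is the identity coming from the matrix ring product in Theorem \ref{thm:Wedderburn}), and that this identity cannot lie in any proper one-sided or two-sided ideal. Since the contradiction is obtained in the same way in all three variants $\eta_\ell$, $\eta_r$, $\eta$, a single argument handles the statement uniformly.
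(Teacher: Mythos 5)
Your argument is correct and amounts to essentially the same approach as the paper's: both rely on the fact that when $\msJ(R)=\{0\}$, the ring $R$ is semisimple and therefore unital, contradicting coverability. The only difference is organizational — the paper argues directly with the Dorroh extension $R'$ (via Lemmas \ref{lem:radDorroh}, \ref{lem:idealsofDorroh} and Theorem \ref{thm:Wedderburn}), whereas you route through Proposition \ref{prop:R=S+J}, which packages that same machinery; your extra case $R=\{0\}$ is harmless but superfluous, since the zero ring admits no cover and is therefore never $\eta_*$-elementary.
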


\begin{proof}
By Lemma \ref{lem:Rcharp}, $R$ has characteristic $p$. Suppose $\msJ(R) = \{0\}$.  Then, $\msJ(R') = \{0\}$ by Lemma \ref{lem:radDorroh}, and so $R'$ is semisimple by Theorem \ref{thm:Wedderburn}.  By Lemma \ref{lem:idealsofDorroh}, $\{0\} \times R$ is a two-sided ideal of $R'$, so $R$ is a direct sum of simple direct factors of $R'$; see e.g. \cite[Thm.\ VIII.5]{McDonald_1974}.  However, this implies that $R$ contains a multiplicative identity, which is a contradiction to $R$ having a cover by $*$-ideals.
\end{proof}

Henceforth, we will define $J \colonequals \msJ(R)$.  We can now apply Nakayama's Lemma (Lemma \ref{lem:Nakayama}) to obtain additional restrictions on $J$.

\begin{lem}\label{lem:JR=0}\mbox{}
\begin{enumerate}[(1)]
\item If $R$ is an $\eta_\ell$-elementary ring, then $JR = \{0\}$.
\item If $R$ is an $\eta$-elementary ring, then $JR = RJ = \{0\}$.
\end{enumerate}
\end{lem}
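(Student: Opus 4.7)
The plan is to apply Nakayama's Lemma (Lemma \ref{lem:Nakayama}) in conjunction with Lemma \ref{lem:idealcoverrelations}. By Lemma \ref{lem:Rcharp}, $R$ is finite, so when viewed as the ideal $\{0\} \times R$ of its Dorroh extension $R'$, it is a finitely generated left $R'$-module (and likewise a finitely generated right $R'$-module). Moreover, left ideals of $R$ are exactly left $R'$-submodules of $\{0\} \times R$, and a direct computation using Lemma \ref{lem:radDorroh} shows $\msJ(R') \cdot (\{0\} \times R) = \{0\} \times JR$, with the analogous identity on the right giving $\{0\} \times RJ$.

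For part (1), suppose $R$ is $\eta_\ell$-elementary with a minimal left ideal cover $L_1, \ldots, L_m$, so $m = \eta_\ell(R)$. For each $i$, the sum $L_i + JR$ is again a left ideal of $R$. If $L_i + JR = R$, then Nakayama's Lemma applied to $M = \{0\} \times R$, $N = \{0\} \times L_i$, and $I = \msJ(R')$ forces $L_i = R$, contradicting the properness of $L_i$. Hence each $L_i + JR$ is a proper left ideal, and $\{L_i + JR\}_{i=1}^m$ is a cover of $R$ by proper left ideals having at most $m = \eta_\ell(R)$ distinct members; it is therefore itself a minimal cover. Since $JR$ is a two-sided ideal of $R$ contained in every $L_i + JR$, Lemma \ref{lem:idealcoverrelations} applied to this new cover yields $JR = \{0\}$.

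For part (2), each member $L_i$ of a minimal two-sided ideal cover is in particular a left ideal, so the same reasoning gives $JR = \{0\}$. The symmetric argument---using $L_i + RJ$ in place of $L_i + JR$, together with the right-module version of Nakayama's Lemma, valid because each $L_i$ is now also a right ideal---yields $RJ = \{0\}$. The main point requiring care is verifying that Nakayama's Lemma applies in our nonunital setting, which is precisely what the passage to $R'$ via Lemmas \ref{lem:radDorroh} and \ref{lem:Rcharp} allows; once that identification is in hand, everything reduces to the purely formal step of replacing the cover $\{L_i\}$ by $\{L_i + JR\}$ (or $\{L_i + RJ\}$) and invoking Lemma \ref{lem:idealcoverrelations}.
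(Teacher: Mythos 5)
Your proof is correct and takes essentially the same approach as the paper, combining Nakayama's Lemma (applied to left $R'$-modules via the Dorroh extension, using Lemma \ref{lem:radDorroh} to identify $\msJ(R')$) with Lemma \ref{lem:idealcoverrelations}. The only difference is organizational: you enlarge each member $L_i$ of a minimal cover to $L_i + JR$ and argue directly that the resulting collection is again a minimal cover, whereas the paper first shows that every maximal left ideal contains $JR$ and then (implicitly) reduces to a minimal cover consisting of maximal left ideals before invoking Lemma \ref{lem:idealcoverrelations}; both variants are valid and resolve the same bookkeeping step.
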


\begin{proof}
(1) Assume that $R$ is $\eta_\ell$-elementary. Clearly, $JR$ is a two-sided ideal of $R$.  Let $N$ be a maximal left ideal of $R$.  Then, $N + JR$ is also a left ideal of $R$, so either $N + JR = N$ or $N + JR = R$.  Viewing the left ideals of $R$ as left $R'$-modules and recalling that $\msJ(R') = \{0\} \times J$ by Lemma \ref{lem:radDorroh}, $N + JR = R$ in $R$ implies $N + \msJ(R')R = R$. By Nakayama's Lemma, $N = R$, a contradiction to the choice of $N$.  Thus, every maximal left ideal of $R$ contains $JR$.  By Lemma \ref{lem:idealcoverrelations}, $JR = \{0\}$.

(2) Assume that $R$ is $\eta$-elementary. Since every two-sided ideal of $R$ is a left ideal, proceeding as in (1) shows that $JR = \{0\}$. Similarly, working with right ideals and using the analogous form of Nakayama's Lemma for right modules, we see that $RJ = \{0\}$.
\end{proof}

We now have everything necessary to classify $\eta$-elementary rings.

\begin{proof}[Proof of Theorem \ref{thm:two-sided}]
$(\Leftarrow)$ Assume that $R^2=\{0\}$ and $(R,+) \cong C_p \times C_p$. Because $R^2=\{0\}$, the two-sided ideals of $R$ are precisely the additive subgroups of $(R,+)$. The group $C_p \times C_p$ has covering number $p+1$, so $\eta(R) = \sigma(C_p \times C_p) = p+1$. Also, each quotient group of $C_p \times C_p$ by a nontrivial subgroup is cyclic, hence is not coverable. It follows that $R$ is $\eta$-elementary.

$(\Rightarrow)$ Assume that $R$ is $\eta$-elementary. By Lemma \ref{lem:Rcharp}, $R$ is finite of characteristic $p$. Write $R = S \oplus J$ as in Proposition \ref{prop:R=S+J}. By Lemma \ref{lem:JR=0}, $JR = RJ = \{0\}$, so multiplication in $R$ follows the rule
\begin{equation}\label{eq: mult rule}
(s_1+x_1)(s_2+x_2) = s_1s_2,
\end{equation}
for all $s_1, s_2 \in S$ and $x_1, x_2 \in J$. From \eqref{eq: mult rule}, we see that $S$ is a two-sided ideal of $R$. Moreover, $J$ is nonzero by Lemma \ref{lem:Jnot0}; $J^2 = \{0\}$; and $I \subseteq J$ is a two-sided ideal of $R$ if and only if $(I,+)$ is a subgroup of $(J,+)$.

We claim that $S =\{0\}$. Suppose not, and let $e = 1_S$. If $(J,+) = \langle x \rangle$ is cyclic, then $R$ is not coverable. Indeed, by \eqref{eq: mult rule} we have $e(e+x) = e^2=e$, so any ideal of $R$ containing $e+x$ contains both $e$ and $x$, and hence must equal $R$. So, $(J,+)$ is not cyclic, and hence $J$ is coverable by two-sided ideals. Since ideal covers of $J$ correspond to subgroup covers of $(J,+)$, we have $\eta(J) = p+1$ by \cite[Thm.\ 2]{Cohn_1994}. But then, $\eta(R) < \eta(R/S) = \eta(J)=p+1$ because $R$ is $\eta$-elementary. This is impossible, because an ideal cover of $R$ induces an additive cover of $(R,+)$, and $(R,+)$ has covering number $p+1$ by \cite[Thm.\ 2]{Cohn_1994}. We conclude that $S = \{0\}$ and $R=J$. As a consequence, we have $R^2=\{0\}$.

It remains to show that $(R,+) \cong C_p \times  C_p$. As in the previous paragraph, $(R,+)$ cannot be cyclic, because then $R$ is not coverable. Since $|R|=p^d$ for some $d \ge 1$, $(R,+)$ contains a subgroup $H$ such that (as additive groups) $R/H \cong C_p \times C_p$. Since $R^2=\{0\}$, $H$ is a two-sided ideal of $R$. We have $\eta(R/H) = p+1 \le \eta(R) \le \eta(R/H)$, so $\eta(R)=\eta(R/H)$. Since $R$ is $\eta$-elementary, $H=\{0\}$, and we are done.
\end{proof}

For the remainder of the paper, we will focus exclusively on covers by left ideals and $\eta_\ell$-elementary rings. When $R$ is $\eta_\ell$-elementary, by Lemma \ref{lem:Rcharp}, $R$ is finite of characteristic $p$; $R = S \oplus J$ by Proposition \ref{prop:R=S+J}; and $JR = \{0\}$ by Lemma \ref{lem:JR=0}. If $S \neq \{0\}$, then $S$ is semisimple of characteristic $p$ and has a multiplicative identity $e$. Let $SJ \colonequals \{sx : s \in S \text{ and } x \in J\}$, which will be a left $S$-module if $S$ is nontrivial, and define $K \colonequals \{x \in J : Rx = \{0\}\}$.

\begin{lem}\label{lem: direct sums and K}
Let $R$ be an $\eta_\ell$-elementary ring.
\begin{enumerate}[(1)]
\item Both $SJ$ and $K$ are two-sided ideals of $R$, and $J = SJ \oplus K$.
\item $R = S \oplus SJ \oplus K$.
\item $S \oplus SJ$ is a two-sided ideal of $R$.
\item For every left ideal $L \subseteq R$, we have $L = \big(L \cap (S \oplus SJ)\big) \oplus (L \cap K)$, where $L \cap (S \oplus SJ)$ and $L \cap K$ are left ideals of the rings $S \oplus SJ$ and $K$, respectively.
\end{enumerate}
\end{lem}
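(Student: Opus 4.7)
The plan is to exploit two structural facts repeatedly: the identity $JR = \{0\}$ from Lemma \ref{lem:JR=0}(1), which annihilates every product whose left factor lies in $J$ (so in particular $J^2 = \{0\}$ and $JS = \{0\}$), and the central role of the idempotent $e = 1_S$, which acts as the identity on $S$ and also on $SJ$ via $e(sx) = (es)x = sx$.

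For (1), I would introduce the additive map $\phi \colon J \to J$, $\phi(x) = ex$. Since $e^2 = e$, $\phi$ is an idempotent endomorphism of $(J,+)$, giving $J = \ker\phi \oplus \mathrm{im}\,\phi$. The image is exactly $SJ$: one containment is clear, and every $sx \in SJ$ equals $e(sx)$. For the kernel, if $ex = 0$ and $r = s' + j' \in R$, then $rx = s'x + j'x = s'(ex) + 0 = 0$, using $JR = \{0\}$ to kill $j'x$; so $\ker\phi \subseteq K$, and the reverse containment is immediate since $e \in R$. I would then check that $SJ$ and $K$ are two-sided ideals directly: right multiplication by any $r \in R$ annihilates elements of either set because both lie in $J$ and $JR = \{0\}$; left multiplication into $SJ$ is handled by $(s'+j')(sx) = s'sx \in SJ$; left multiplication into $K$ follows from $rx \in J$ (since $J$ is two-sided) combined with $R(rx) \subseteq Rx = \{0\}$. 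Part (2) is then immediate from $R = S \oplus J$ combined with (1).

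For (3), the multiplication table collapses all cross terms: for $s + y \in S \oplus SJ$ (with $s \in S$, $y \in SJ$) and $r = s' + x \in R$, all the mixed products $xs$, $xy$, $ys'$, $yx$ vanish thanks to $JR = \{0\}$, leaving $r(s+y)$ and $(s+y)r$ inside $S \oplus SJ$. The conceptually important step is (4). The key observation is that the decomposition $R = (S \oplus SJ) \oplus K$ is multiplicatively nearly orthogonal: $K \cdot R \subseteq JR = \{0\}$ and $(S \oplus SJ) \cdot K = \{0\}$ (since for $s \in S$ and $x \in K$ we have $sx \in Rx = \{0\}$, and $SJ \cdot K \subseteq JR = \{0\}$). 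Consequently, left multiplication by $e$ realises the projection onto $S \oplus SJ$ along $K$: if $\ell = a + b$ with $a \in S \oplus SJ$ and $b \in K$, then $e\ell = a + 0 = a$. Since $L$ is a left ideal and $e \in R$, we conclude $a = e\ell \in L$ and hence $b = \ell - a \in L$, proving $L = \bigl(L \cap (S \oplus SJ)\bigr) + (L \cap K)$; the sum is direct because the ambient sum $R = (S \oplus SJ) \oplus K$ is. That $L \cap (S \oplus SJ)$ and $L \cap K$ are left ideals of their respective summands is immediate from closure of $L$ under left multiplication by $R$, restricted to each summand. The main obstacle will simply be careful bookkeeping of the mixed products; the one genuine trick is recognising that $e$ is the projector onto $S \oplus SJ$, which is what allows part (4) to be extracted from an arbitrary left ideal $L$.
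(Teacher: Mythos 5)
Your proof is correct and takes essentially the same route as the paper; in particular, the projection trick in part (4) --- left multiplication by $e$ is the projection onto $S \oplus SJ$ along $K$, so $e\ell \in L$ splits $\ell$ --- is exactly the paper's argument. The only stylistic difference is in (1), where you package the decomposition as $J = \ker\phi \oplus \mathrm{im}\,\phi$ for the idempotent additive endomorphism $\phi(x) = ex$, whereas the paper verifies $J = SJ + K$ and $SJ \cap K = \{0\}$ directly; these amount to the same computation. One small omission: you never address the degenerate case $S = \{0\}$, where $e = 1_S$ is not defined and your map $\phi$ makes no sense. The paper disposes of it first: there $R = J$, so $R^2 = JR = \{0\}$ forces $K = J$ and $SJ = \{0\}$, and every claim in the lemma is trivial. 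Your argument should lead with that reduction (or note it is vacuous for noncommutative $R$, which is the only setting where the lemma is later applied with $S \neq \{0\}$).
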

\begin{proof}
(1) By Lemma \ref{lem:JR=0}, $JR=\{0\}$. With this is mind, it is routine to check that $SJ$ and $K$ are two-sided ideals of $R$. If $S = \{0\}$, then $R = J$, which implies $R^2 = J^2 = \{0\}$ by Lemma \ref{lem:JR=0}. Hence, $R = J = K$, and the result follows. So, assume $S \neq \{0\}$.  Both $SJ$ and $K$ are subsets of $J$, so $SJ + K \subseteq J$. For the reverse containment, let $y \in J$. Then, $ey \in SJ$ and $y-ey \in K$. Thus, $y \in SJ + K$ and $J = SJ + K$. To show that the sum is direct, let $x \in SJ \cap K$. Then, $ex = x$ because $x \in SJ$, and $ex = 0$ because $x \in K$. Hence, $x=0$ and $J = SJ \oplus K$.

(2) This follows from (1) and Proposition \ref{prop:R=S+J}.

(3) Consider the multiplication of two elements of $R$. Let $s_1, s_2 \in S$, $x_1, x_2 \in SJ$, and $k_1, k_2 \in K$. Since $JR = \{0\}$, we have $(s_1+x_1+k_1)(s_2+x_2+k_2) = s_1(s_2+x_2+k_2)$. Moreover, $s_1 k_2 = 0$. Thus,
\begin{equation*}
(s_1+x_1+k_1)(s_2+x_2+k_2) = s_1(s_2 + x_2) = (s_1 + x_1)(s_2 + x_2).
\end{equation*}
From this, it follows that $S \oplus SJ$ is closed under both left and right multiplication by elements of $R$, and hence is a two-sided ideal of $R$.

(4) Let $L$ be a left ideal of $R$ and let $L' = (L \cap (S \oplus SJ)) + (L \cap K)$. The containment $L' \subseteq L$ is clear, so let $a \in L$. By (2), we may write $a = b + k$, where $b \in S \oplus SJ$ and $k \in K$. Then, $b = eb = e(b+k) \in L$ and $k=a-b \in L$. Thus, $L = L'$. Lastly, $L$ is the direct sum of $L \cap (S \oplus SJ)$ and $L \cap K$ because $(S \oplus SJ) \cap K = \{0\}$.
\end{proof}

\begin{prop}\label{prop: K must be 0}
Let $R$ be an $\eta_\ell$-elementary ring. Then either $S \oplus SJ = \{0\}$ or $K = \{0\}$.  In particular, either $J = K$ or $J = SJ$.
\end{prop}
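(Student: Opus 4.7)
The plan is to observe that the additive decomposition $R = (S \oplus SJ) \oplus K$ from Lemma \ref{lem: direct sums and K}(2) is actually a direct product of rings, and then apply Lemma \ref{lem: direct products of rings}(3) to force one of the factors to vanish.

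First, I would verify that multiplication in $R$ splits across this decomposition. By Lemma \ref{lem:JR=0}, $JR = \{0\}$, so $K \cdot R \subseteq JR = \{0\}$. By the definition $K = \{x \in J : Rx = \{0\}\}$, we also have $R \cdot K = \{0\}$. In particular $(S \oplus SJ) \cdot K = K \cdot (S \oplus SJ) = \{0\}$, so the additive decomposition $R = (S \oplus SJ) \oplus K$ becomes a direct product of rings $R \cong (S \oplus SJ) \times K$; both factors are rings as they are two-sided ideals of $R$ by Lemma \ref{lem: direct sums and K}(1), (3).

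Next, Lemma \ref{lem: direct sums and K}(4) says that every left ideal $L$ of $R$ decomposes as $L = (L \cap (S \oplus SJ)) \oplus (L \cap K)$, with the summands being left ideals of $S \oplus SJ$ and $K$ respectively. This is precisely the assertion that left ideals of $R$ respect the product decomposition $R \cong (S \oplus SJ) \times K$. Since $R$ is finite by Lemma \ref{lem:Rcharp}(2), the hypotheses of Lemma \ref{lem: direct products of rings} are met, and part (3) of that lemma forces at most one nonzero factor, i.e., either $S \oplus SJ = \{0\}$ or $K = \{0\}$.

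For the ``in particular'' claim, recall that $J = SJ \oplus K$ by Lemma \ref{lem: direct sums and K}(1). If $S \oplus SJ = \{0\}$, then $SJ = \{0\}$ and thus $J = K$; if $K = \{0\}$, then $J = SJ$ directly. The proof is essentially a corollary of the structural lemmas already in place; the only thing to verify with care is that both cross products $(S \oplus SJ) \cdot K$ and $K \cdot (S \oplus SJ)$ vanish, and the compatibility of the left ideal decomposition with the product structure, both of which are already encoded in the preceding results.
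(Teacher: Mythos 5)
Your proof is correct and follows essentially the same route as the paper: decompose $R \cong (S \oplus SJ) \times K$ as a direct product of rings using $JR = \{0\}$ and $RK = \{0\}$, invoke Lemma \ref{lem: direct sums and K}(4) to see that left ideals respect the product, and apply Lemma \ref{lem: direct products of rings}(3). Your writeup is slightly more explicit than the paper's in verifying the cross-annihilation and in spelling out the ``in particular'' claim, but the argument is the same.
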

\begin{proof}
Let $R_1 = S \oplus SJ$ and $R_2 = K$. As additive groups, $R = R_1 \oplus R_2$ by Lemma \ref{lem: direct sums and K}(2). However, $R_1 R_2 = \{0\} = R_2 R_1$, so $R_1 \oplus R_2 \cong R_1 \times R_2$ as rings. By Lemma \ref{lem: direct sums and K}(4), left ideals of $R$ respect this decomposition. Thus, by Lemma \ref{lem: direct products of rings}(3), either $S \oplus SJ = \{0\}$ or $K=\{0\}$.
\end{proof}

\begin{prop}
 \label{prop:structR}
 Let $R$ be a noncommutative $\eta_\ell$-elementary ring. Then, $R = S \oplus J$, where $J = SJ$.  In particular,
 \[ R \cong \left\{\begin{pmatrix}
                    s & j \\
                    0 & 0
                   \end{pmatrix} : s \in S \text{ and } j \in J \right\},
\]
where the operations are matrix addition and matrix multiplication.
\end{prop}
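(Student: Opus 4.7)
The plan is to leverage Proposition \ref{prop: K must be 0} together with the noncommutativity hypothesis to force $J = SJ$, and then verify the matrix form by a direct computation using $JR = \{0\}$.

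First, by Proposition \ref{prop:R=S+J} we already know $R = S \oplus J$, and by Lemma \ref{lem:JR=0}(1) we have $JR = \{0\}$. Proposition \ref{prop: K must be 0} gives the dichotomy: either $J = K$ or $J = SJ$. I would rule out the first case by a contradiction. If $J = K$, then $S \oplus SJ = \{0\}$, so in particular $S = \{0\}$; hence $R = J$. But then $R^2 = J^2 \subseteq JR = \{0\}$, which forces every product $ab = 0 = ba$ in $R$, making $R$ commutative. This contradicts the standing assumption that $R$ is noncommutative, so we must have $J = SJ$ (equivalently, $K = \{0\}$).

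For the matrix identification, I would simply verify that the map
\[
\varphi \colon R \longrightarrow \left\{\begin{pmatrix} s & j \\ 0 & 0 \end{pmatrix} : s \in S,\ j \in J\right\}, \qquad s + j \longmapsto \begin{pmatrix} s & j \\ 0 & 0 \end{pmatrix},
\]
is a ring isomorphism. Additivity and bijectivity are immediate from $R = S \oplus J$, and multiplicativity reduces to the identity
\[
(s_1 + j_1)(s_2 + j_2) = s_1 s_2 + s_1 j_2,
\]
which holds because $j_1 s_2, j_1 j_2 \in JR = \{0\}$; this matches the product of the corresponding $2 \times 2$ block matrices. The only real obstacle is the case analysis above, and the key insight is that the ``$J = K$'' alternative collapses $R$ to a ring with trivial multiplication, which is automatically commutative.
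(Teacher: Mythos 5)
Your proposal is correct and follows essentially the same route as the paper's own proof: both invoke the dichotomy from Proposition \ref{prop: K must be 0}, rule out the ``$J = K$'' alternative by showing it would force $R^2 = \{0\}$ and hence commutativity, and then verify the $2\times 2$ block-matrix identification via the rule $(s_1+j_1)(s_2+j_2) = s_1s_2 + s_1j_2$ coming from $JR = \{0\}$. One small imprecision worth tightening: after stating only the ``in particular'' form of the dichotomy ($J = K$ or $J = SJ$), you assert that $J = K$ implies $S \oplus SJ = \{0\}$; this does not follow from $J = K$ alone but rather from the primary dichotomy in Proposition \ref{prop: K must be 0} together with the fact that $J = \msJ(R) \neq \{0\}$ (Lemma \ref{lem:Jnot0}), which forces $K \neq \{0\}$ and hence rules out the $K = \{0\}$ branch. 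The paper sidesteps this by working directly with the primary dichotomy ``$S \oplus SJ = \{0\}$ or $K = \{0\}$'' and the decomposition $R = S \oplus SJ \oplus K$.
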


\begin{proof}
By Lemma \ref{lem: direct sums and K}(2) and Proposition \ref{prop: K must be 0}, $R = S \oplus SJ \oplus K$ and either $S \oplus SJ = \{0\}$ or $K = \{0\}$. If $S \oplus SJ = \{0\}$, then $R = K$ and $R^2 = \{0\}$, which contradicts $R$ being noncommutative.  Hence, $S \oplus SJ \neq \{0\}$, which means $K = \{0\}$ by Proposition \ref{prop: K must be 0}, and hence $R = S \oplus J$, where $J = SJ$.  Finally, any $r_1, r_2 \in R$ can be expressed uniquely as $r_1 = s_1 + j_1$, $r_2 = s_2 + j_2$, where $s_1, s_2 \in S$, $j_1, j_2 \in J$, and, since
 \[ r_1 + r_2 = (s_1 + s_2) + (j_1 + j_2), \; r_1r_2 = (s_1 + j_1)(s_2 + j_2) = s_1s_2 + s_1j_2,\]
 the map $r = s + j \mapsto \left(\begin{smallmatrix} s & j \\ 0 & 0 \end{smallmatrix}\right)$
is an isomorphism.
\end{proof}

The rings described Theorem \ref{thm:main}(1) have the form specified by Proposition \ref{prop:structR}. To prove Theorem \ref{thm:main}(1), it remains to show two things. First, that there exist $\eta_\ell$-elementary rings of the form given in Proposition \ref{prop:structR}. Second, when $R=S \oplus J$ is $\eta_\ell$-elementary, we must prove that $S$ is a simple ring and $J$ is a simple left $S$-module. These claims are dealt with in Sections \ref{sec: new numbers} and \ref{sec: structure}.

\section{An infinite family of $\eta_\ell$-elementary rings} \label{sec: new numbers}

Let $q$ be a prime power and $n \ge 1$ an integer. We now construct examples of nonunital rings $R$ which are the union of finitely many proper left ideals, show that they are $\eta_\ell$-elementary, and prove that $\eta_\ell(R) = q^n + q^{n-1} + \cdots + q + 1 = \frac{q^{n+1}-1}{q-1}$. 

Throughout this section, let $R$ be the set consisting of all $(n+1) \times (n+1)$ matrices with entries from $\F_q$ whose last row is zero, which is the ring described in Theorem \ref{thm:main}(1). Then,

\begin{itemize}
\item $R$ is a ring under matrix addition and multiplication. 
\item Elements of $R$ can be represented as $(A|v)$, where $A \in M_n(\F_q)$ and $v \in \F_q^n$ is a column vector. Multiplication in $R$ follows the rule $$(A|v)(B|w) = (AB|Aw).$$
\item Under the representation above, let $e=(I_n|0)$, where $I_n$ is the $n \times n$ identity matrix. Then, $e$ is a left identity of $R$, but is not a right identity. We have $eRe \cong M_n(\F_q)$ as rings. Let $S := eRe$.
\end{itemize}

\begin{lem}\label{lem: ideals and modules}
Let $L \subseteq R$. Then, $L$ is a left ideal of $R$ if and only if $L$ is a left $S$-submodule of $R$.
\end{lem}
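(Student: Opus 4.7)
The plan is to exploit the concrete description of $R$ together with the decomposition $R = S \oplus J$, where $S = eRe = \{(A|0) : A \in M_n(\F_q)\}$ consists of the matrices supported on the top-left block, and $J = \{(0|v) : v \in \F_q^n\}$ consists of the matrices supported on the top-right column. The forward implication will be essentially immediate: since $S$ is a subring of $R$, any left ideal $L$ of $R$ is automatically closed under left multiplication by $S$, and (as an ideal) is also an additive subgroup of $R$; hence $L$ is a left $S$-submodule of $R$.

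For the reverse implication, the plan is to show that $RL \subseteq L$ whenever $L$ is a left $S$-submodule. Given an arbitrary $r = (A|v) \in R$ and $\ell \in L$, I write $r = s + j$ with $s = (A|0) \in S$ and $j = (0|v) \in J$, so that $r\ell = s\ell + j\ell$. The first summand lies in $L$ by the $S$-submodule assumption. For the second, a one-line matrix computation gives
\[
(0|v)(B|w) = (0 \cdot B \mid 0 \cdot w) = (0|0)
\]
for every $(B|w) \in R$, hence $j\ell = 0$. Thus $r\ell = s\ell \in L$, and $L$ is a left ideal.

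There is no real obstacle here: the only nontrivial ingredient is the identity $JR = \{0\}$, which is a special case of Lemma \ref{lem:JR=0}(1) but in this concrete setting is just the matrix calculation above. Consequently the statement of the lemma reduces to the observation that in the decomposition $R = S \oplus J$, the summand $J$ acts trivially on $R$ by left multiplication, so left multiplication by $R$ and left multiplication by $S$ have the same effect on any additive subgroup.
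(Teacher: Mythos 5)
Your proof is correct and is essentially the same as the paper's: the paper uses the left identity $e$ to write $r\ell = (er)(e\ell) = (ere)\ell$ with $ere \in S$, and in the matrix representation $ere = (A|0)$ is exactly your $s$, so the two computations agree. Both hinge on the same fact that the $J$-component of $r$ acts trivially by left multiplication, which in the paper is absorbed into the identity $ere = (A|0)$ and which you make explicit as $j\ell = 0$.
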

\begin{proof}
In both implications, closure under addition is trivial.

$(\Rightarrow)$ This is clear, because $S \subseteq R$ and $RL \subseteq L$.

$(\Leftarrow)$ Assume that $L$ is a left $S$-submodule of $R$. Let $r \in R$ and $\ell \in L$. Since $e$ is a left identity of $R$, $r\ell = (er)(e\ell) = (ere)\ell$, and this is in $L$ because $ere \in S$.
\end{proof}

The following facts about $S$-modules hold because $S$ is a (semi)simple ring.
\begin{itemize}
\item $S$-modules are semisimple. Hence, a nonzero $S$-module is a direct sum of simple $S$-modules.


\item Since $S \cong M_n(\F_q)$, simple $S$-modules are isomorphic (as $S$-modules) to $\F_q^n$. In particular, all such simple modules have size $q^n$. Consequently, any finite $S$-module must have order $q^{nk}$ for some $k \ge 0$.

\item $S$-modules are injective. So, if $L$ and $M$ are $S$-modules and $L \subseteq M$, then there exists an $S$-module $L'$ such that $M = L \oplus L'$. 
\end{itemize}

\begin{lem}\label{lem: ideals of size q^{n^2}}
Let $L$ be a left ideal of $R$. Then, $L$ is a maximal left ideal of $R$ if and only if $|L| = q^{n^2}$.
\end{lem}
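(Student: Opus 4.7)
The plan is to use Lemma \ref{lem: ideals and modules} to identify left ideals of $R$ with left $S$-submodules, and then exploit the fact that $S \cong M_n(\F_q)$ is semisimple with a unique simple module (up to isomorphism) of order $q^n$. Under this lens, maximal left ideals of $R$ correspond to maximal $S$-submodules, whose sizes are forced by the simple-summand count.

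First, I would write down the left $S$-module decomposition of $R$. Representing an element of $R$ as $(A \mid v)$ with $A \in M_n(\F_q)$ and $v \in \F_q^n$, left multiplication by $(B \mid 0) \in S$ gives $(BA \mid Bv)$, and this acts separately (and in the standard way) on each of the $n$ columns of $A$ and on the column $v$. Hence, as a left $S$-module, $R$ splits as a direct sum of $n+1$ copies of the simple module $\F_q^n$, so in particular $|R| = q^{n(n+1)}$.

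Next, because $S$ is semisimple and its unique simple module $\F_q^n$ has order $q^n$, any $S$-submodule $L$ of $R$ is itself semisimple and must decompose as $L \cong (\F_q^n)^{\oplus k}$ for a unique $0 \le k \le n+1$. Thus $|L| = q^{nk}$, and the quotient $S$-module $R/L \cong (\F_q^n)^{\oplus(n+1-k)}$ has order $q^{n(n+1-k)}$.

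Finally, $L$ is a maximal left ideal of $R$ iff it is a maximal $S$-submodule of $R$ iff $R/L$ is a simple $S$-module; by the previous step, this holds exactly when $n+1-k = 1$, i.e., $k=n$, which is equivalent to $|L| = q^{n^2}$. I do not foresee any real obstacle: once the $S$-module structure of $R$ is made explicit, the statement reduces to counting simple summands.
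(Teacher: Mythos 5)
Your proof is correct and takes essentially the same approach as the paper: both invoke Lemma \ref{lem: ideals and modules} to pass to $S$-modules and then use semisimplicity to count simple summands. The paper phrases the key step via injectivity (producing a complement $L'$ with $R = L \oplus L'$) rather than decomposing $R$ explicitly as $(\F_q^n)^{\oplus(n+1)}$ and passing to $R/L$, but since $R/L \cong L'$ these are the same argument.
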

\begin{proof}
By Lemma \ref{lem: ideals and modules}, $L$ is an $S$-module. Since $L$ is an injective $S$-module, there exists an $S$-module $L'$ such that $R = L \oplus L'$. The $S$-module $L'$ is itself semisimple, so $L$ is maximal if and only if $L'$ is simple, if and only if $|L|=q^{n^2}$.
\end{proof}

Next, we define two families of left ideals in $R$. Both families will turn out to consist of maximal left ideals that are also cyclic left ideals of $R$.

\begin{defn}\label{def: L_v}
For each $v \in \F_q^n$ (viewed as a column vector) and 1-dimensional subspace $V \subseteq \F_q^n$, let $L_v := \{(A|Av) : A \in S\}$ and $N_V := \{(A|w) : AV = 0 \text{ and } w \in \F_q^n\}$. That is, $N_V$ consists of the elements $(A|w) \in R$ such that $V$ is contained in the right nullspace of $A$, and there are no restrictions on $w$.
\end{defn}

\begin{lem}\label{lem: L_v lemma}\mbox{}
\begin{enumerate}[(1)]
\item For each $v \in \F_q^n$, $L_v$ is a maximal left ideal of $R$ and is generated by $(I_n|v)$.
\item For $v, w \in \F_q^n$, $L_v = L_w$ if and only if $v=w$.
\item Let $(A|v) \in R$, where $A \in S$ is invertible. Then, $(A|v) \in L_w$, where $w = A^{-1}v$.
\end{enumerate}
\end{lem}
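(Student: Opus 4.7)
My plan is to handle the three parts in order, leveraging the two prior results (Lemma \ref{lem: ideals and modules} and Lemma \ref{lem: ideals of size q^{n^2}}) so that everything reduces to manipulating the block form $(A|v)$ under the product rule $(A|v)(B|w) = (AB|Aw)$.

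For (1), I would first verify that $L_v$ is a left $S$-submodule of $R$: if $B \in S$ and $(A|Av) \in L_v$, then $B(A|Av) = (BA \mid (BA)v) \in L_v$, and closure under addition is immediate from $(A_1|A_1v) + (A_2|A_2v) = (A_1 + A_2 \mid (A_1+A_2)v)$. By Lemma \ref{lem: ideals and modules}, this upgrades to $L_v$ being a left ideal of $R$. To see it is generated by $(I_n|v)$, observe that for any $(B|w) \in R$ we have $(B|w)(I_n|v) = (B|Bv)$, and as $B$ ranges over all of $S$ these sweep out exactly $L_v$; the left identity $e = (I_n|0)$ also gives $e(I_n|v) = (I_n|v) \in R(I_n|v)$, so $L_v = R(I_n|v)$ as a cyclic left ideal. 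Finally, the map $A \mapsto (A|Av)$ is a bijection $S \to L_v$, so $|L_v| = |S| = q^{n^2}$, and maximality follows at once from Lemma \ref{lem: ideals of size q^{n^2}}.

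Parts (2) and (3) then fall out almost immediately. For (2), the backward direction is trivial; for the forward direction, if $L_v = L_w$ then $(I_n|v) \in L_w$ forces $(I_n|v) = (A|Aw)$ for some $A \in S$, whence $A = I_n$ and $v = Aw = w$. For (3), setting $w = A^{-1}v$ yields $(A|v) = (A \mid A(A^{-1}v)) = (A|Aw)$, which lies in $L_w$ by Definition \ref{def: L_v}.

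I do not foresee any genuine obstacle here, as each assertion is a short computation with the $2$-block multiplication. The one point worth highlighting is that the cardinality count $|L_v| = q^{n^2}$ combined with Lemma \ref{lem: ideals of size q^{n^2}} bypasses any need to analyze the quotient $R/L_v$ directly; without that size criterion, establishing maximality would require more effort.
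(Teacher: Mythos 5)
Your proof is correct and follows essentially the same route as the paper: verify $L_v$ is a left ideal, observe $L_v = R(I_n|v)$, compute $|L_v| = q^{n^2}$ and invoke Lemma~\ref{lem: ideals of size q^{n^2}} for maximality, with (2) and (3) being short direct computations. The only difference is cosmetic — you route the left-ideal check through Lemma~\ref{lem: ideals and modules} and spell out (2) explicitly where the paper just says it follows from (1).
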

\begin{proof}
(1) It is straightforward to check that $L_v$ is a left ideal of $R$. Since $(A|Av) = (A|0)(I_n|v)$, the left ideal $L_v$ is generated by $(I_n|v)$. Finally, $|L_v| = |M_n(\F_q)| = q^{n^2}$, so $L_v$ is maximal by Lemma \ref{lem: ideals of size q^{n^2}}.

(2) This follows from (1).

(3) Clearly, $(A|v) = (A|0)(I_n|A^{-1}v) \in L_w$.
\end{proof}

\begin{lem}\label{lem: N_V lemma}\mbox{}
\begin{enumerate}[(1)]
\item For each 1-dimensional subspace $V \subseteq \F_q^n$, $N_V$ is a maximal left ideal of $R$.
\item For each 1-dimensional subspace $V \subseteq \F_q^n$, $N_V$ is a cyclic left ideal of $R$.
\item Let $V$ and $W$ be 1-dimensional subspaces of $\F_q^n$. Then, $N_V = N_W$ if and only if $V=W$.
\item Let $(A|v) \in R$ such that $A \in S$ is not invertible. Then, $(A|v) \in N_V$ for any 1-dimensional subspace $V \subseteq \F_q^n$ such that $AV = 0$.
\end{enumerate}
\end{lem}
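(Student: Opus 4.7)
My plan is to treat the four claims in turn, leaning on the explicit description of $N_V$ together with the characterization of maximal left ideals from Lemma \ref{lem: ideals of size q^{n^2}} (namely, size exactly $q^{n^2}$). The case of interest is $\dim V \ge 1$, and specifically $\dim V = 1$ for the maximality claim (1), which is the range of $V$ needed for the intended cover in the next section.

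For (1), I first check by direct computation that $N_V$ is a left ideal: for any $(B|u) \in R$ and $(A|w) \in N_V$, $(B|u)(A|w) = (BA|Bw)$, and $(BA)V = B(AV) = 0$. To get maximality when $\dim V = 1$, I count $|N_V|$: the set of $A \in S$ with $AV = 0$ is naturally identified with $\operatorname{Hom}_{\F_q}(\F_q^n/V,\F_q^n)$, of size $q^{n(n-1)}$, and $w$ ranges freely over $\F_q^n$, yielding $|N_V| = q^{n^2}$. Lemma \ref{lem: ideals of size q^{n^2}} then gives maximality.

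For (2), I will exhibit an explicit cyclic generator. Pick $A_0 \in S$ with $\ker A_0 = V$ (so $\operatorname{rank} A_0 = n-1$), and choose $w_0 \in \F_q^n \setminus \operatorname{col}(A_0)$, which is possible since $\dim \operatorname{col}(A_0) = n-1 < n$. The containment $R \cdot (A_0|w_0) \subseteq N_V$ is clear because $(BA_0)V = 0$ for every $B$. Equality then follows by a counting argument: in the stabilizer $\{(B|u) \in R : BA_0 = 0 \text{ and } Bw_0 = 0\}$, the coordinate $u$ is unrestricted, while $BA_0 = 0$ forces $B$ to kill $\operatorname{col}(A_0)$ and hence to factor through the one-dimensional quotient $\F_q^n/\operatorname{col}(A_0)$, in which the class of $w_0$ is a generator; the additional condition $Bw_0 = 0$ then forces $B = 0$. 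The stabilizer thus has size $q^n$, so $|R \cdot (A_0|w_0)| = q^{n^2+n}/q^n = q^{n^2} = |N_V|$, and the two ideals coincide.

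For (3), the forward direction is trivial. Conversely, assuming $V \ne W$, without loss of generality pick $v \in V \setminus W$, and build $A \in S$ that sends a basis of $W$ to zero but $v$ to a nonzero vector (for instance by extending a basis of $W$ by $v$ and prescribing the action freely on a complement). Then $(A|0)$ lies in $N_W$ but not in $N_V$, so $N_V \ne N_W$. Claim (4) is immediate from the definition: given $(A|v) \in R$ with $AV = 0$, the element $(A|v)$ is literally of the form required to belong to $N_V$, and the hypothesis that $A$ is non-invertible is used only to guarantee that a nonzero $V \subseteq \ker A$ exists, which is the relevant case for the upcoming covering argument. The only step requiring genuine care is the choice of generator in (2), where $w_0$ must lie outside $\operatorname{col}(A_0)$; this is the main obstacle, and it exploits precisely the fact that $\operatorname{rank} A_0 < n$.
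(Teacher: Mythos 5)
Your proof is correct on all four parts. For (1), (3), and (4) you use essentially the same ideas as the paper (the size count in (1) via $\operatorname{Hom}(\F_q^n/V,\F_q^n)$ is equivalent to the paper's count via the annihilator $V^\perp$; your (3) constructs $A$ killing $W$ but not $v$, while the paper picks $A_V$ with exact kernel $V$ — the same observation phrased differently; (4) is immediate in both). You are also right to flag that the maximality claim in (1) requires $\dim V = 1$, a hypothesis the paper leaves implicit in the statement but uses in its proof (via $\dim V^\perp = n-1$) and in the count $\binom{n}{1}_q$ of Proposition \ref{prop: (n,1)_q covering numbers}.

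Part (2) is where you genuinely diverge. The paper constructs an explicit generator $B = (A_0|e_n)$ with the rows of $A_0$ a basis of $V^\perp$ and then exhibits an explicit matrix $C$ with $(A|w) = C \cdot B$ for every $(A|w) \in N_V$. You instead choose any $A_0$ with $\ker A_0 = V$ and any $w_0 \notin \operatorname{col}(A_0)$, show $R(A_0|w_0) \subseteq N_V$, compute the left annihilator $\{(B|u) : BA_0 = 0,\ Bw_0 = 0\} = \{0\} \times \F_q^n$ (of size $q^n$), and deduce $|R(A_0|w_0)| = |R|/q^n = q^{n^2} = |N_V|$, forcing equality. Your route is more conceptual and avoids writing out the factoring matrix; the paper's is fully constructive. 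Both are valid; yours has the small advantage of making clear that \emph{any} $(A_0|w_0)$ with $\ker A_0 = V$ and $w_0 \notin \operatorname{col}(A_0)$ generates $N_V$, and the paper's generator is one instance of this (its $A_0$ has zero last row, hence $\operatorname{col}(A_0)$ is the coordinate hyperplane $x_n = 0$, and $e_n$ lies outside it). One small terminological point: the set $\{(B|u) : (B|u)(A_0|w_0) = 0\}$ is the left \emph{annihilator} of $(A_0|w_0)$, not its stabilizer, though the orbit-size computation you perform with it is correct.
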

\begin{proof}
(1) Let $V$ be a 1-dimensional subspace of $\F_q^n$. It is easy to verify that $N_V$ is a left ideal of $R$. For maximality, let $V^\perp$ be the subspace complement to $V$ in $\F_q^n$. Then, $\dim_{\F_q}(V^\perp) = n-1$, so $|V^\perp| = q^{n-1}$. Now, $(A|w) \in N_V$  if and only if each row of $A$ is the transpose of a vector in $V^\perp$. Thus, there are $q^{(n-1)n}$ choices for $A$ and $q^n$ choices for $w$, so $|N_V| = q^{(n-1)n} q^n = q^{n^2}$. Hence, $N_V$ is maximal by Lemma \ref{lem: ideals of size q^{n^2}}.

(2) Fix a 1-dimensional subspace $V \subseteq \F_q^n$ with complement subspace $V^\perp$. Let $\{b_1, \ldots, b_{n-1}\}$ be a basis (of column vectors) for $V^\perp$. Let
\begin{equation*}
B = \begin{augmat}{1} b_1^T & 0 \\ \vdots & \vdots \\ b_{n-1}^T & 0 \\ 0 & 1 \end{augmat} \in R.
\end{equation*}
Then, $b_i^T v = 0$ for all $1 \le i \le n-1$ and all $v \in V$, so $B \in N_V$. Next, given $(A|w) \in N_V$, the transpose of row $i$ of $A$ is in $V^\perp$, so that row is an $\F_q$-linear combination of $b_1^T, \ldots, b_{n-1}^T$. For $1 \le i \le n$ and $1 \le j \le n-1$, let $c_{i,j} \in \F_q$ be such that row $i$ of $A$ is equal $c_{i,1}b_1^T + \cdots + c_{i,n-1}b_{n-1}^T$. Also, let $w_i$ be the entry of $w$ in row $i$. Then,
\begin{equation*}
(A|w) = \begin{augmat}{4} c_{1,1} & \cdots & c_{1,n-1} & w_1 & 0\\ \vdots & \ddots & \vdots & \vdots & \vdots \\ c_{n,1} & \cdots & c_{n,n-1} & w_n & 0\end{augmat} \begin{augmat}{1} b_1^T & 0 \\ \vdots & \vdots \\ b_{n-1}^T & 0 \\ 0 & 1 \end{augmat}.
\end{equation*}
This shows that $N_V = RB$.

(3) This follows from the observation that for any 1-dimensional subspaces $V$ and $W$, there exist matrices $A_V$ and $A_W$ such that the right nullspaces of $A_V$ and $A_W$ are exactly $V$ and $W$, respectively. 

(4) This is clear from the definition of $N_V$.
\end{proof}

\begin{prop}\label{prop: (n,1)_q covering numbers}
The left ideal covering number of $R$ is $\eta_\ell(R) = \frac{q^{n+1}-1}{q-1}$.
\end{prop}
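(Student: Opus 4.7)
The plan is to establish the identity $\eta_\ell(R) = \frac{q^{n+1}-1}{q-1}$ by matching upper and lower bounds, both built from the two families $\{L_v\}$ and $\{N_V\}$ introduced in Definition~\ref{def: L_v}.

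For the upper bound, the natural candidate cover is
\[
\mathcal{C} := \{L_v : v \in \F_q^n\} \cup \{N_V : V \text{ a $1$-dimensional subspace of } \F_q^n\},
\]
of cardinality $q^n + \frac{q^n-1}{q-1} = \frac{q^{n+1}-1}{q-1}$. To see that $\mathcal{C}$ covers $R$, I would split on invertibility of the matrix part of an arbitrary $(A|v) \in R$: if $A$ is invertible then Lemma~\ref{lem: L_v lemma}(3) places $(A|v)$ in $L_{A^{-1}v}$, and otherwise the right nullspace of $A$ contains some $1$-dimensional subspace $V$ and Lemma~\ref{lem: N_V lemma}(4) places $(A|v)$ in $N_V$. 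Distinctness among the $L_v$'s is Lemma~\ref{lem: L_v lemma}(2), distinctness among the $N_V$'s is Lemma~\ref{lem: N_V lemma}(3), and no $L_v$ coincides with any $N_V$ because $(I_n|v) \in L_v$ but $(I_n|v) \notin N_V$ (as $V$ is $1$-dimensional and hence $I_n \cdot V = V \neq 0$).

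For the lower bound, I would exhibit a distinguished element of each ideal of $\mathcal{C}$ and argue that no proper left ideal can contain two of them. Set $e_v := (I_n|v)$, which generates $L_v$ by Lemma~\ref{lem: L_v lemma}(1), and for each $1$-dimensional $V$ let $e_V$ be the cyclic generator of $N_V$ produced by Lemma~\ref{lem: N_V lemma}(2). Every ideal in $\mathcal{C}$ is maximal (Lemmas~\ref{lem: L_v lemma}(1) and \ref{lem: N_V lemma}(1)) and they are pairwise distinct, so any two of them sum to $R$ by standard maximality. Consequently, if $R = L_1 \cup \cdots \cup L_m$ is any cover by proper left ideals, the map sending each distinguished element $e_i$ to an index $j(i)$ with $e_i \in L_{j(i)}$ must be injective---otherwise $L_{j(i)}$ would contain $Re_i + Re_{i'} = R$, contradicting properness---giving $m \ge |\mathcal{C}| = \frac{q^{n+1}-1}{q-1}$.

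I do not anticipate any substantive obstacle: every ingredient above is delivered by a lemma already proved in the section, and both bounds are engineered from the same family of ideals. The only care required is the bookkeeping, namely counting $|\mathcal{C}|$, verifying the non-coincidence of $L_v$ and $N_V$, and invoking the standard fact that distinct maximal left ideals always sum to the whole ring.
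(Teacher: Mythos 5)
Your proof is correct and takes essentially the same approach as the paper: the upper bound comes from the cover by all $L_v$ and $N_V$ (via Lemmas~\ref{lem: L_v lemma}(3) and~\ref{lem: N_V lemma}(4)), and the lower bound from the fact that each $L_v$ and $N_V$ is both cyclic and maximal, so must appear in any cover. Your injectivity argument on the generators is a slight repackaging of the paper's observation that each such cyclic maximal left ideal must itself be one of the cover members, and your explicit check that no $L_v$ coincides with any $N_V$ is a small but welcome extra detail that the paper leaves implicit.
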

\begin{proof}
By Lemmas \ref{lem: L_v lemma}(1), \ref{lem: N_V lemma}(1) and \ref{lem: N_V lemma}(2), each left ideal $L_v$ and $N_V$ is a maximal left ideal of $R$, and is generated (as a left ideal) by a single element. Thus, each $L_v$ and $N_V$ must be part of any cover of $R$ by left ideals. By Lemma \ref{lem: L_v lemma}(2), the number of distinct maximal left ideals $L_v$ is $|\F_q^n| = q^n$; and by Lemma \ref{lem: N_V lemma}(3), the number of distinct maximal left ideals $N_V$ is equal to the number of 1-dimensional subspaces of $\F_q^n$, which is (using Gaussian binomial coefficients) $\binom{n}{1}_q = 1 + q + \cdots + q^{n-1}$. So, $\eta_\ell(R) \ge q^n + q^{n-1} + \cdots + 1$. On the other hand, $\eta_\ell(R) \le q^n + q^{n-1} + \cdots + 1$ because the collection of all left ideals $L_v$ and $N_V$ forms a cover of $R$ by Lemmas \ref{lem: L_v lemma}(3) and \ref{lem: N_V lemma}(4). We conclude that $\eta_\ell(R) = q^n + q^{n-1} + \cdots + 1 = \frac{q^{n+1}-1}{q-1}$.
\end{proof}

It remains to demonstrate that $R$ is $\eta_\ell$-elementary. For this, we will describe all of the two-sided ideals of $R$.

\begin{lem}\label{lem: two-sided ideals of R}
The Jacobson radical of $R$ is $\msJ(R) = \{(0|v) : v \in \F_q^n\}$, and this is the only proper nonzero two-sided ideal of $R$.
\end{lem}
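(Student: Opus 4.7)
The plan is to first classify all proper nonzero two-sided ideals of $R$, and then identify $\msJ(R)$ among them. Write $J_0 := \{(0|v) : v \in \F_q^n\}$. Using the multiplication rule $(A|u)(B|w) = (AB|Aw)$, one checks directly that $J_0$ is a two-sided ideal with $J_0^2 = \{0\}$; in particular $J_0$ is a nilpotent (hence left quasi-regular) ideal, so $J_0 \subseteq \msJ(R)$ by the remark following the definition of the Jacobson radical.

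To show $J_0$ is the only proper nonzero two-sided ideal, let $I$ be any such ideal. The projection $\pi : R \to M_n(\F_q)$ defined by $(A|v) \mapsto A$ is a surjective ring homomorphism with kernel $J_0$, and $\pi(I)$ is a two-sided ideal of the simple ring $M_n(\F_q)$. If $\pi(I) = M_n(\F_q)$, then $(I_n|v) \in I$ for some $v$, and since $(I_n|v)(B|w) = (B|w)$ for every $(B|w) \in R$, this would force $I = R$, contrary to hypothesis. Hence $\pi(I) = \{0\}$ and $I \subseteq J_0$. Now the action of $R$ on $J_0$ by left multiplication is $(A|u) \cdot (0|v) = (0|Av)$, so $J_0$ is isomorphic to the simple $M_n(\F_q)$-module $\F_q^n$. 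Therefore the left $R$-submodule $I$ of $J_0$ equals either $\{0\}$ or $J_0$, and since $I$ is nonzero, $I = J_0$.

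Since $\msJ(R)$ is a two-sided ideal containing $J_0$, the classification above reduces the remaining task to excluding the possibility $\msJ(R) = R$. This is the principal obstacle, since the Jacobson radical of a nonunital ring can in general equal the whole ring, so no abstract argument is available. I would settle it by a direct quasi-regular computation using the left identity $e = (I_n|0)$: we have $e \in Re$ (as $e = e \cdot e$), and if $(B|w) \in R$ satisfied $(B|w) \circ e = (B|w) + e - (B|w)e = 0$, then $(B|w) + (I_n|0) - (B|0) = (I_n|w) = 0$, forcing $I_n = 0$, which is absurd. Hence $e$ is not left quasi-regular, so $e \notin \msJ(R)$, and consequently $\msJ(R) = J_0$.
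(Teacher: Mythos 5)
Your proof is correct and takes essentially the same approach as the paper: first classify the proper nonzero two-sided ideals of $R$ (via simplicity of $M_n(\F_q)$ and of $\F_q^n$ as an $M_n(\F_q)$-module), then identify $\msJ(R)$ among $\{0\}$, $J_0$, and $R$ by showing $J_0 \subseteq \msJ(R)$ and $e \notin \msJ(R)$. The paper asserts these last two facts without justification, whereas you verify them explicitly via nilpotency of $J_0$ and a direct left quasi-regularity computation for $e$, and your use of the projection homomorphism $\pi$ in the classification step is a clean repackaging of the paper's in-place argument.
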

\begin{proof}
Let $J=\{(0|v) : v \in \F_q^n\}$. Then, $J$ is a proper nonzero subset of $R$, and it is easy to see that $J$ is a two-sided ideal of $R$. We will show that $J$ is the only such ideal with these properties. Let $I$ be any proper nonzero two-sided ideal of $R$. 

Suppose that there exists $(A|0) \in I$ with $A \ne 0$. Then, $I$ contains the two-sided ideal of $M_n(\F_q)$ generated by $A$, which is equal to $M_n(\F_q)$ itself. Hence, the left identity $e$ of $R$ is in $I$, and since $IR \subseteq I$, we see that $I=R$. This is a contradiction. So, whenever $(A|0) \in I$, $A$ must be 0.

Now, let $(B|v) \in I$. Then, $(B|0) = (B|v)e \in I$. By the previous paragraph, $B = 0$, so each element of $I$ has the form $(0|v)$. Thus, $I \subseteq J$. Since $I$ is nonzero and $M_n(\F_q)$ acts irreducibly on $\F_q^n$, we must have $I=J$.

Finally, $\msJ(R)$ is a two-sided ideal of $R$. This ideal is nonzero because $J \subseteq \msJ(R)$, and it is proper because $(I_n|0) \notin \msJ(R)$. Thus, $\msJ(R)=J$.
\end{proof}

\begin{cor}\label{cor: eta-elementary matrix ring}
The ring $R$ is $\eta_\ell$-elementary.
\end{cor}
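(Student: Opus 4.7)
The plan is to invoke Lemma \ref{lem: two-sided ideals of R} to sharply restrict which ideals $I$ need be considered, and then observe that modding out by either of the surviving possibilities collapses $R$ to something that admits no cover by proper left ideals at all.

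More precisely, by Lemma \ref{lem: two-sided ideals of R} the only proper nonzero two-sided ideal of $R$ is $J = \msJ(R) = \{(0|v) : v \in \F_q^n\}$. So to verify the defining inequality of $\eta_\ell$-elementarity, I only need to check it for $I = J$ (and trivially for $I = R$, where $R/I$ is the zero ring and $\eta_\ell(R/I) = \infty$ by the convention that $\eta_\ell = \infty$ when no cover by proper left ideals exists).

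For $I = J$, the quotient $R/J$ is, by construction, isomorphic to $S \cong M_n(\F_q)$, which is a unital ring. As already noted in the introduction, a unital ring cannot be written as the union of its proper (left) ideals, since any left ideal containing the identity is the whole ring. Hence $\eta_\ell(R/J) = \infty$.

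Combining this with Proposition \ref{prop: (n,1)_q covering numbers}, which gives the finite value $\eta_\ell(R) = \tfrac{q^{n+1}-1}{q-1}$, we obtain $\eta_\ell(R) < \eta_\ell(R/I)$ for every nonzero two-sided ideal $I$ of $R$, so $R$ is $\eta_\ell$-elementary. There is no real obstacle here: all the work has been done in Lemma \ref{lem: two-sided ideals of R} (identifying the unique nontrivial two-sided ideal) and Proposition \ref{prop: (n,1)_q covering numbers} (finiteness of $\eta_\ell(R)$); the corollary is a one-line consequence.
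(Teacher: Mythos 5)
Your proposal is correct and follows essentially the same argument as the paper: invoke Lemma \ref{lem: two-sided ideals of R} to see that $\msJ(R)$ is the only proper nonzero two-sided ideal, note that $R/\msJ(R) \cong M_n(\F_q)$ is unital and hence not coverable by proper left ideals, and combine with Proposition \ref{prop: (n,1)_q covering numbers} to get $\eta_\ell(R) < \infty$. The only cosmetic difference is that you explicitly handle the case $I = R$, which the paper leaves implicit since the definition of $\eta_\ell$-elementary quantifies over two-sided ideals giving a proper quotient.
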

\begin{proof}
By Proposition \ref{prop: (n,1)_q covering numbers}, $R$ is coverable by left ideals. By Lemma \ref{lem: two-sided ideals of R}, $\msJ(R)$ is the only proper nonzero two-sided ideal of $R$. The residue ring $R/\msJ(R) \cong M_n(\F_q)$ is unital, and hence not coverable by left ideals. Therefore, $R$ is $\eta_\ell$-elementary.
\end{proof}

\begin{rem}
Because $R$ contains a left identity, $R$ cannot be covered by right ideals or two-sided ideals. Hence, $\eta_r(R) = \eta(R) = \infty$.
\end{rem}

\section{Structure of noncommutative $\eta_\ell$-elementary rings} \label{sec: structure}

Throughout this section, let $R$ be a noncommutative $\eta_\ell$-elementary ring. By Proposition \ref{prop:structR}, $R=S \oplus J$, where $S$ is a subring of $R$ which is semisimple of characteristic $p$ and $J=\msJ(R)$ is a left $S$-module. Furthermore, $JR=\{0\}$ and
\[ R \cong \left\{\begin{pmatrix}
                    s & j \\
                    0 & 0
                   \end{pmatrix} : s \in S \text{ and } j \in J \right\},
\]
where the operations are usual matrix addition and multiplication. 

The purpose of this section is to prove that $S$ is a simple ring and $J$ is a simple left $S$-module. Consequently, we can deduce that $S \cong M_n(\F_q)$ for some prime power $q$ and integer $n \ge 1$, and that $J \cong \F_q^n$. This then corresponds to one of the $\eta_\ell$-elementary rings constructed in Section \ref{sec: new numbers}, which will complete the proof of Theorem \ref{thm:main}.

\begin{prop}\label{prop: S must be simple}
 $S$ is a simple ring. Thus, $S \cong M_n(\F_q)$ for some prime power $q$ and integer $n \ge 1$.
\end{prop}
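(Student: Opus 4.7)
The plan is to show that if $S$ decomposes as a nontrivial direct product of simple rings, then $R$ itself decomposes as a nontrivial direct product of rings in a way compatible with left ideals, contradicting Lemma \ref{lem: direct products of rings}(3).

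Since $S$ is semisimple of characteristic $p$, by Wedderburn--Artin we may write $S = S_1 \times \cdots \times S_t$ with each $S_i$ simple, and let $e_i \in S$ be the associated central idempotent, so that $S_i = e_i S$ and $1_S = e_1 + \cdots + e_t$. The crucial point enabling the argument is that, because $J = SJ$ by Proposition \ref{prop:structR}, the element $e := 1_S$ acts as a left identity on all of $R = S \oplus J$; in particular $J = eJ = \bigoplus_{i=1}^t e_i J$ as additive groups, and each $e_i J$ is a two-sided ideal of $R$ (using $JR = \{0\}$ and centrality of $e_i$ in $S$). Setting $R_i := S_i \oplus e_i J$, a direct computation using the multiplication rule $(s_1 + j_1)(s_2 + j_2) = s_1 s_2 + s_1 j_2$ together with $e_i e_j = 0$ for $i \neq j$ shows $R_i R_j = \{0\}$ for $i \neq j$ and $R_i$ is closed under multiplication, so $R \cong R_1 \times \cdots \times R_t$ as rings.

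Next I verify that left ideals of $R$ respect this decomposition. If $L$ is a left ideal of $R$, then for every $\ell \in L$ we have $\ell = e\ell = \sum_i e_i \ell$ with $e_i \ell \in L$ (since $e_i \in R$). Moreover $e_i \ell \in e_i R = R_i$, so $L = \bigoplus_i (L \cap R_i)$, where $L \cap R_i$ is a left ideal of $R_i$. Applying Lemma \ref{lem: direct products of rings}(3) to the $\eta_\ell$-elementary ring $R$ with this decomposition forces $t = 1$, so $S$ is simple.

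Finally, since $R$ is finite (Lemma \ref{lem:Rcharp}(2)), $S$ is a finite simple ring of characteristic $p$; by Wedderburn--Artin it is of the form $M_n(D)$ for some division ring $D$, and by Wedderburn's little theorem $D$ is a finite field $\F_q$. Hence $S \cong M_n(\F_q)$ for some prime power $q$ and integer $n \ge 1$. The only real point of care is ensuring the decomposition $R = \prod R_i$ is truly a ring direct product (not merely an additive splitting) and that left ideals split along it; both rely essentially on $J = SJ$ and $JR = \{0\}$, which were already established in Section \ref{sec: reduction}.
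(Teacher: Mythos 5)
Your proof is correct and takes essentially the same route as the paper: decompose $S$ into simple factors $S_i$ with central idempotents $e_i$, set $R_i = S_i \oplus e_iJ$ (the paper writes $J_i = S_iJ$, which equals $e_iJ$ since $J = SJ$), show $R \cong \prod_i R_i$ as rings using $e_ie_j = 0$ and $JR = \{0\}$, verify that left ideals split along this decomposition via $\ell = e\ell = \sum_i e_i\ell$, and invoke Lemma~\ref{lem: direct products of rings}(3). The only cosmetic difference is that you spell out the Wedderburn--Artin plus Wedderburn's little theorem step at the end, which the paper leaves implicit.
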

\begin{proof}
Since $S$ is semisimple, we have $S = \bigoplus_{i=1}^t S_i$ for simple unital rings $S_1, \ldots, S_t$ that mutually annihilate one another (that is, $S_i S_j = \{0\}$ whenever $i \ne j$). We will show that $t=1$. For each $1 \le i \le t$, let $e_i$ be the multiplicative identity of $S_i$, so that $e=e_1 + \cdots + e_t$ and $e_i e_j = 0$ when $i \ne j$. Also, for each $i$ let $J_i = S_i J$ (it is possible that $J_i = \{0\}$). We have $J = SJ$ and
\begin{equation*}
R = S \oplus SJ = \Big(\bigoplus_{i=1}^t S_i\Big) \oplus \Big(\bigoplus_{i=1}^t S_i\Big)J = \Big(\bigoplus_{i=1}^t S_i\Big) \oplus \Big(\bigoplus_{i=1}^t J_i\Big) = \bigoplus_{i=1}^t (S_i \oplus J_i).
\end{equation*}
We know that $e_i e_j = 0$ when $i \ne j$, and $JR = \{0\}$, so the rings $S_i \oplus J_i$ and $S_j \oplus J_j$ mutually annihilate one another when $i \ne j$. It follows that, as rings, $R \cong \prod_{i=1}^t (S_i \oplus J_i)$. We will show that left ideals of $R$ respect this decomposition.

Let $L$ be a left ideal of $R$. For each $i$, let $L_i = L \cap (S_i \oplus J_i)$, which is a left ideal of $S_i \oplus J_i$. Certainly, $\bigoplus_{i=1}^t L_i \subseteq L$. For the reverse inclusion, assume that $a \in L$. Since $a \in R$, we may write $a = \sum_{i=1}^t a_i$, where each $a_i \in S_i \oplus J_i$. Then, $e_i a = e_i a_i = a_i \in S_i \oplus J_i$. But, $e_i a \in L$, so $a_i \in L_i$. Thus, $a = \sum_{i=1}^t a_i \in \bigoplus_{i=1}^t L_i$, and so $L = \bigoplus_{i=1}^t L_i$.

We can now apply Lemma \ref{lem: direct products of rings}(3) and conclude that $t=1$. Thus, $S \cong R/J$ is a simple unital ring.
\end{proof}

Next, suppose that $S \cong M_n(\F_q)$ for some prime power $q$ and integer $n \ge 1$. Let $e$ be the multiplicative identity of $S$, which is also a left identity of $R$. Since $S$ is simple, we may express $J$ as $J = \bigoplus_{i=1}^\lambda L_i$, where $\lambda \ge 1$ and each $L_i$ is a simple left $S$-module. So, $|L_i| = q^n$ for each $i$. Our goal is to show that $\lambda = 1$.

Note that since $JR = \{0\}$, any left ideal contained in $J$ is a two-sided ideal of $R$. In particular, $I = \bigoplus_{i=2}^\lambda L_i$ (which is $\{0\}$ if $\lambda=1$) is a two-sided ideal of $R$, and $R/I$ is isomorphic to the ring of Section \ref{sec: new numbers}. So, $R$ is coverable by left ideals and $\eta_\ell(R) \le \frac{q^{n+1}-1}{q-1} = \binom{n+1}{1}_q$.

\begin{defn}\label{def: S_x notations}
For each $x \in J$, let $S_x := \{A + Ax : A \in S\}$. When $\lambda \ge 2$, let $\widehat{J}_1 := \bigoplus_{i=2}^\lambda L_i$ and $\widehat{J}_2 := L_1 \oplus \bigoplus_{i=3}^\lambda L_i$. Finally, when $\lambda \ge 2$ and $x \in L_1$, define $T_{1,x} := S_x \oplus \widehat{J}_1$; and if $x \in L_2$, then define $T_{2,x} := S_x \oplus \widehat{J}_2$.
\end{defn}

We will show that each $T_{1,x}$ and each $T_{2,x}$ is a maximal left ideal of $R$ with a two-sided ideal complement in $R$. By the next lemma, each such left ideal must be part of any minimal cover of $R$.

\begin{lem}\label{lem: max ideal with complement}
Let $R$ be an $\eta_\ell$-elementary ring with minimal cover $I_1, \ldots, I_m$ by left ideals. If $R=M \oplus I$ for some maximal left ideal $M$ and two-sided ideal $I \neq \{0\}$ of $R$, then $M=I_k$ for some $k$.
\end{lem}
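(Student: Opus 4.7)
The plan is to obtain a contradiction by constructing an efficient cover of $R/I$ whenever $M$ is missing from the cover, and then invoking $\eta_\ell$-elementarity.  Suppose, for contradiction, that $M \neq I_k$ for every $k \in \{1,\dots,m\}$.  The first step is to observe that, since $M$ is a maximal left ideal and each $I_k$ is a proper left ideal with $I_k \neq M$, we cannot have $M \subseteq I_k$, so $M \cap I_k \subsetneq M$ for every $k$.

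Next, for each $k$, I would introduce the auxiliary left ideal
\[ L_k \colonequals (M \cap I_k) + I. \]
Because $M \cap I = \{0\}$, this sum is actually direct; and because $M \cap I_k \subsetneq M$, the dimension/order count forces $L_k = (M \cap I_k) \oplus I \subsetneq M \oplus I = R$, so each $L_k$ is a \emph{proper} left ideal of $R$ that contains $I$.  Hence each $L_k/I$ is a proper left ideal of $R/I$.

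The key step is to verify that $\{L_k\}_{k=1}^{m}$ still covers $R$.  Given $r \in R$, decompose $r = m + i$ with $m \in M$, $i \in I$; since $\{I_k\}$ covers $R$, there is some $k$ with $m \in I_k$, and then $m \in M \cap I_k$, so $r = m + i \in (M \cap I_k) + I = L_k$.  Passing to the quotient, $\{L_k/I\}_{k=1}^{m}$ is a cover of $R/I$ by at most $m$ proper left ideals, giving $\eta_\ell(R/I) \le m = \eta_\ell(R)$.  Since $I \neq \{0\}$ and $R$ is $\eta_\ell$-elementary, this contradicts $\eta_\ell(R) < \eta_\ell(R/I)$, so $M$ must equal some $I_k$.

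I do not expect a serious obstacle here: the argument is essentially a direct-sum bookkeeping exercise, and the only point requiring slight care is confirming that $L_k$ is proper (which uses the maximality of $M$ through $M \cap I_k \subsetneq M$) and that $L_k$ contains $I$ so that it genuinely descends to a proper left ideal of $R/I$.
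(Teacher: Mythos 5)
Your proof is correct and follows essentially the same approach as the paper: suppose $M$ is omitted from the minimal cover, use maximality of $M$ to get $M \cap I_k \subsetneq M$, and transport the resulting cover of (a copy of) $R/I$ into a contradiction with $\eta_\ell$-elementarity. The paper phrases this by covering $M$ directly with the ideals $I_k \cap M$ and invoking the ring isomorphism $M \cong R/I$, whereas you inflate each $I_k \cap M$ by $I$ to get proper left ideals $L_k = (M \cap I_k) \oplus I$ of $R$ containing $I$ and then pass to the quotient; under $M \cong R/I$ these two constructions coincide, so the difference is purely presentational.
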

\begin{proof}
If $M \subseteq I_k$ for some $k$, then $M=I_k$ by maximality and we are done. So, suppose that $M \not\subseteq I_k$ for all $k$. Then, $I_1 \cap M,\ldots,I_m \cap M$ is a cover of $M$ by left ideals, so $\eta_\ell(R)=m \ge \eta_\ell(M) = \eta_\ell(R/I)$, a contradiction to $R$ being $\eta_\ell$-elementary.
\end{proof}

\begin{lem}\label{lem: S_x lemma} Assume that $\lambda \ge 2$.
\begin{enumerate}[(1)]
\item For each $x \in J$, $S_x$ is a left ideal of $R$, and $S_x \cong S$ as rings.
\item Let $x \in L_1 \cup L_2$. Then, $T_{1,x}$ and $T_{2,x}$ are both maximal left ideals of $R$. If $x \in L_1$, then $R = T_{1,x} \oplus L_1$; and if $x \in L_2$, then $R = T_{2,x} \oplus L_2$.
\item Each $T_{1,x}$ and $T_{2,x}$ is unique. That is, if $x \in L_1$ and $y \in L_2$, then $T_{1,x} \ne T_{2,y}$; if $x, y \in L_1$ and $x \ne y$, then $T_{1,x} \ne T_{1,y}$; and if $x, y \in L_2$ and $x \ne y$, then $T_{2,x} \ne T_{2,y}$.
\end{enumerate}
\end{lem}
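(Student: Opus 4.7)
The plan is to establish all three parts using two structural facts repeatedly: that $JR = \{0\}$ forces left $R$-submodules inside $J$ to coincide with left $S$-submodules of $J$, and that projection onto the $S$-summand of $R = S \oplus J$ is a convenient tool for extracting information from the defining expression $A + Ax$ of elements in $S_x$.

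For part (1), for any $r = B + j \in R$ (with $B \in S$, $j \in J$) and $A + Ax \in S_x$, I would use $JR = \{0\}$ to compute $r(A + Ax) = B(A + Ax) = BA + (BA)x \in S_x$, giving closure under left multiplication. The map $\phi \colon S \to S_x$ sending $A \mapsto A + Ax$ is clearly additive and surjective, and is injective because projecting the relation $A + Ax = 0$ onto the $S$-summand of $R = S \oplus J$ forces $A = 0$. Multiplicativity follows from $\phi(A)\phi(B) = (A + Ax)(B + Bx) = AB + A(Bx) = AB + (AB)x = \phi(AB)$, again using $JR = \{0\}$ to discard cross terms.

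For part (2), I treat $x \in L_1$; the case $x \in L_2$ is symmetric. First I verify the sum $T_{1,x} = S_x + \widehat{J}_1$ is direct: any relation $A + Ax + y = 0$ with $y \in \widehat{J}_1$ projects to $A = 0$ on the $S$-summand, forcing $y = 0$ as well. Next I show $R = T_{1,x} + L_1$ by writing $A = (A + Ax) + (-Ax) \in S_x + L_1$ for each $A \in S$, which together with $J = L_1 \oplus \widehat{J}_1$ gives $R \subseteq T_{1,x} + L_1$. The intersection $T_{1,x} \cap L_1$ is trivial: any element $A + Ax + y \in L_1$ with $y \in \widehat{J}_1$ has $S$-part $A = 0$, and then $y \in L_1 \cap \widehat{J}_1 = \{0\}$. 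Hence $R = T_{1,x} \oplus L_1$. For maximality, note that $JR = \{0\}$ implies any left $R$-submodule of $L_1$ is a left $S$-submodule, so by simplicity of $L_1$ as an $S$-module, any left ideal $M$ with $T_{1,x} \subseteq M \subseteq R$ satisfies $M = T_{1,x} \oplus (M \cap L_1)$ with $M \cap L_1 \in \{0, L_1\}$, giving $M \in \{T_{1,x}, R\}$.

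For part (3), the strategy is to read off from each $T$ the index ($1$ or $2$) and then recover $x$ itself. For $x \in L_1$, the computation in part (2) showed $L_1 \cap T_{1,x} = \{0\}$, whereas for $y \in L_2$ we have $T_{2,y} \supseteq \widehat{J}_2 \supseteq L_1$; thus $T_{1,x} \neq T_{2,y}$. For $x, y \in L_1$ with $T_{1,x} = T_{1,y}$, each element $A + Ax \in T_{1,y}$ can be written as $B + By + z$ with $B \in S$ and $z \in \widehat{J}_1$; projecting to $S$ gives $A = B$, and then $Ax - Ay = z$ lies in $L_1 \cap \widehat{J}_1 = \{0\}$. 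Setting $A = e$ (the identity of $S$, which is a left identity of $R$) yields $ex = ey$, i.e., $x = y$. The corresponding argument for $T_{2,x} = T_{2,y}$ is identical with $\widehat{J}_2$ in place of $\widehat{J}_1$. No step presents a genuine obstacle; the only point requiring care is the systematic use of $JR = \{0\}$ to collapse $R$-submodule conditions inside $J$ to $S$-submodule conditions, after which everything reduces to projecting onto the $S$-summand.
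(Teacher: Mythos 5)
Your proof is correct and follows essentially the same structural path as the paper's: closure of $S_x$ under left multiplication via $JR = \{0\}$, the ring isomorphism $S \to S_x$, the direct-sum decomposition $R = T_{1,x} \oplus L_1$, and the recovery of $x$ from $T_{1,x}$ by projecting onto $S$ and using $L_1 \cap \widehat{J}_1 = \{0\}$. The one place you diverge is the maximality argument in part (2): the paper counts orders, observing that left ideals of $R$ are exactly left $S$-submodules and that a maximal one must have order $q^{n(n+\lambda-1)} = |T_{1,x}|$, whereas you first establish $R = T_{1,x} \oplus L_1$ directly and then invoke the modular law together with simplicity of $L_1$ to show any left ideal $M$ with $T_{1,x} \subseteq M \subseteq R$ is either $T_{1,x}$ or $R$. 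Both are standard semisimple-module arguments; yours is a bit more structural and avoids explicit counting, while the paper's is shorter once the order computation is accepted. Two minor points: your remark that ``$JR = \{0\}$ implies any left $R$-submodule of $L_1$ is a left $S$-submodule'' is stated the wrong way round (that direction is automatic since $S \subseteq R$; $JR = \{0\}$ is what makes $S$-submodules into $R$-ideals), though only the automatic direction is actually used, so nothing breaks. And your treatment of part (3) for $x,y \in L_1$ correctly carries the $\widehat{J}_1$-component $z$ through the computation, which the paper's prose elides.
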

\begin{proof}
(1) Let $x \in J$. It is clear that $S_x$ is closed under addition. Given $r \in R$, we may write $r = B+ y$, where $B \in S$ and $y \in J$. Then, for any $A+Ax \in S_x$, we have $r(A+Ax) = BA + BAx \in S_x$. Finally, one may verify that the map $S \to S_x$ sending $A \in S$ to $A+Ax \in S_x$ is a ring isomorphism.

(2) We prove this for $T_{1,x}$; the proof for $T_{2,x}$ is similar. Assume that $x \in L_1$. By (1), $S_x$ is a left ideal of $R$, so $T_{1,x}$ is a sum of left ideals, hence itself is a left ideal. For maximality, note that left ideals of $R$ correspond to left $S$-submodules of $R$, and any maximal left $S$-submodule of $R$ has order $q^{n(n+\lambda)-n} = q^{n(n+\lambda-1)} = |T_{1,x}|$. Lastly, we show that $T_{1,x} \cap L_1 = \{0\}$. If $y \in T_{1,x} \cap L_1$, then $y = A + Ax$ for some $A \in S$. So, $A = y-Ax \in S \cap J = \{0\}$, which means that $y=0$. Thus, $T_{1,x} \subsetneqq T_{1,x} \oplus L_1$, and by maximality $T_{1,x} \oplus L_1 = R$.

(3) When $x \in L_1$ and $y \in L_2$, the left ideal $T_{1,x}$ contains nonzero elements of $J_2$, while $T_{2,y}$ does not. Similarly, $T_{2,y}$ contains elements of $L_1$ that are not in $T_{1,x}$. So, $T_{1,x} \ne T_{2,y}$. In the case where $x, y \in L_1$, suppose that $T_{1,x} = T_{1,y}$. Recalling that $e$ is the identity element of $S$, we see that $e + x \in T_{1,x} = T_{1,y}$. So, $e+x = A+Ay$ for some $A \in S$. The condition $S \cap J = \{0\}$ forces $A=e$ and $x=y$. Similarly, for $x, y \in L_2$, having $T_{2,x} = T_{2,y}$ implies that $x=y$.
\end{proof}

\begin{prop}\label{prop: J must be simple}
$J$ is a simple left $S$-module. Thus, $J \cong \F_q^n$.
\end{prop}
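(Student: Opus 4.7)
The plan is to prove $\lambda = 1$ by contradiction: supposing $\lambda \ge 2$, I will trap $\eta_\ell(R)$ between two bounds that cannot coexist. The upper bound will come from a quotient that reduces to the known ring of Section \ref{sec: new numbers}, while the lower bound will come from counting the mandatory maximal left ideals $T_{1,x}$ and $T_{2,y}$ already built in Definition \ref{def: S_x notations}.

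For the upper bound, assume $\lambda \ge 2$. Each $L_i \subseteq J$ is a two-sided ideal of $R$, since $JR = \{0\}$ makes every left $S$-submodule of $J$ automatically a two-sided ideal. Hence $I := \bigoplus_{i=2}^{\lambda} L_i$ is a nonzero two-sided ideal, and the quotient $R/I \cong S \oplus L_1$ matches exactly the ring of Section \ref{sec: new numbers} with parameters $q$ and $n$. Proposition \ref{prop: (n,1)_q covering numbers} then gives $\eta_\ell(R/I) = \binom{n+1}{1}_q$, and the $\eta_\ell$-elementary hypothesis yields $\eta_\ell(R) \le \binom{n+1}{1}_q - 1$.

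For the lower bound, I invoke Lemma \ref{lem: S_x lemma}(2): for each $x \in L_1$ the ideal $T_{1,x}$ is a maximal left ideal with two-sided complement $L_1$, and for each $y \in L_2$ the ideal $T_{2,y}$ is a maximal left ideal with two-sided complement $L_2$. Lemma \ref{lem: max ideal with complement} then forces every $T_{1,x}$ and every $T_{2,y}$ into any minimal left-ideal cover of $R$. Lemma \ref{lem: S_x lemma}(3) shows that distinct parameters produce distinct ideals and that the two families do not overlap, so a minimal cover contains at least $|L_1| + |L_2| = 2q^n$ distinct maximal left ideals, giving $\eta_\ell(R) \ge 2q^n$.

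Comparing the bounds yields the contradiction: since $\binom{n+1}{1}_q = q^n + q^{n-1} + \cdots + 1 < 2q^n$ for every prime power $q$ and every $n \ge 1$, the string $2q^n \le \eta_\ell(R) \le \binom{n+1}{1}_q - 1 < 2q^n$ is impossible. Therefore $\lambda = 1$, and $J = L_1$ is a simple left $S$-module, necessarily isomorphic to $\F_q^n$. The only real obstacle is conceptual, and it has already been overcome in the build-up to this proposition: one must produce two genuinely distinct families of ``graph-type'' maximal ideals that each require a two-sided complement, so that Lemma \ref{lem: max ideal with complement} can be invoked twice; once those are in hand, the remaining numerical comparison of Gaussian binomials is immediate.
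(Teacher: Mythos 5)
Your argument is correct and follows essentially the same route as the paper: sandwich $\eta_\ell(R)$ between the lower bound $2q^n$ (coming from the forced maximal left ideals $T_{1,x}$, $T_{2,y}$ via Lemmas \ref{lem: S_x lemma} and \ref{lem: max ideal with complement}) and the upper bound $\binom{n+1}{1}_q$ (from the quotient $R/I$). The only cosmetic difference is that you invoke the $\eta_\ell$-elementary hypothesis to sharpen the upper bound to $\binom{n+1}{1}_q - 1$, whereas the paper uses only the general lifting inequality $\eta_\ell(R) \le \eta_\ell(R/I) = \binom{n+1}{1}_q$; either version suffices for the contradiction.
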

\begin{proof}
Suppose that $\lambda  \ge 2$. As noted before Definition \ref{def: S_x notations}, $\eta_\ell(R) \le \binom{n+1}{1}_q$. Since $|L_1| = |L_2| = q^n$, by Lemma \ref{lem: S_x lemma}(3), $R$ contains $2q^n$ left ideals of the form $T_{1,x}$ or $T_{2,x}$. Recall that because $JR = \{0\}$, both $L_1$ and $L_2$ are two-sided ideals of $R$. So, by Lemma \ref{lem: S_x lemma}(2), each $T_{1,x}$ and each $T_{2,x}$ is a maximal left ideal of $R$ with a two-sided ideal complement in $R$. 

Since $R$ is $\eta_\ell$-elementary, by Lemma \ref{lem: max ideal with complement}, each $T_{1,x}$ and each $T_{2,x}$ must be part of any minimal cover of $R$ by left ideals. Thus, $\eta_\ell(R) \ge 2q^n$. This is a contradiction, because $\binom{n+1}{1}_q < 2q^n$. Therefore, $\lambda=1$ and the result follows.
\end{proof}

We complete the paper with our proof of Theorem \ref{thm:main}.

\begin{proof}[Proof of Theorem \ref{thm:main}]
 First, it suffices to prove the result for $\eta_\ell$-elementary rings, since the proof of (2) follows from passing to the opposite ring.  Let $R$ be a noncommutative $\eta_\ell$-elementary ring.  By Lemma \ref{lem:Rcharp}(2), $R$ is finite and has characteristic $p$. By Proposition \ref{prop:structR}, $R = S \oplus J$, where $J$ is the Jacobson radical of $R$, $S \neq \{0\}$ is semisimple, $SJ = J$, and by Lemma \ref{lem:JR=0}, $JR = \{0\}$.  By Proposition \ref{prop: S must be simple}, $S$ is simple, i.e., $S \cong M_n(\F_q)$ for some prime power $q$ and integer $n \ge 1$.  By Proposition \ref{prop: J must be simple}, $J$ is a simple left $S$-module, thus $J \cong \F_q^n$.  Finally, $R$ is actually $\eta_\ell$-elementary by Corollary \ref{cor: eta-elementary matrix ring}, and $\eta_\ell(R) = \frac{q^{n+1} - 1}{q - 1}$ by Proposition \ref{prop: (n,1)_q covering numbers}.
\end{proof}

\section{Covering numbers of modules}\label{sec: modules}

In this section, we prove Theorem \ref{thm: all module covers}, which determines all possible finite covering numbers of modules by submodules. Throughout, we work with left modules, although our proofs and techniques apply equally well to right modules.

\begin{notn}\label{not: sigma_R(M)}
For a unital ring $R$ and a left $R$-module $M$, we let $\sigma_R(M)$ be the cardinality of a minimal cover of $M$ by proper left $R$-submodules.
\end{notn}

\begin{lem}\label{lem: reduce to finite semisimple}
Let $R$ be a unital ring and let $M$ be a left $R$-module that admits a finite cover by proper left $R$-submodules. Then, there exist a residue ring $\olR$ of $R$ and a quotient module $\olM$ of $M$ such that $\olR$ is a finite semisimple ring, $\olM$ is a left $\olR$-module, and $\sigma_{R}(M) = \sigma_{\olR}(\olM)$.
\end{lem}
\begin{proof}
Following the proof of \cite[Thm.\ 2.2(2),(3)]{Khare_Tikaradze_2022}, we may assume that $M$ and $R$ are both finite. As in \cite{Khare_Tikaradze_2022}, we may assume that $M$ is finite by considering the induced cover of the underlying abelian group. Furthermore, we may view $M$ as a faithful left $R/{\rm{ann}}_R(M)$-module, but since $M$ is finite, this is a finite quotient. So, without loss of generality we may assume that $R$ is also finite.

Let $J = \msJ(R)$ be the Jacobson radical of $R$, let $\olR=R/J$, and let $\olM$ be the left $\olM$-module $M/JM$. Since $M$ is finitely generated, $JM$ equals the intersection of all maximal $R$-submodules of $M$; and so any minimal irredundant cover of $M$ (by maximal $R$-submodules) can be projected to a minimal cover of $\olM$ by $\olR$-submodules. Conversely, a minimal cover of $\olM$ can be lifted to a minimal cover of $M$. Thus, $\sigma_{R}(M)=\sigma_{\olR}(\olM)$.
\end{proof}

\begin{lem}\label{lem: reduce to finite simple}
Let $R$ be a finite semisimple ring and let $M$ be a left $R$-module that admits a finite cover by left $R$-submodules. Then, there exists a simple direct summand $S$ of $R$ and an $R$-submodule $N \subseteq M$ such that $N$ is a left semisimple $S$-module and $\sigma_{R}(M) = \sigma_{S}(N)$.
\end{lem}
\begin{proof}
Since $R$ is semisimple, we may write $R = \bigoplus_{i=1}^t S_i$ for some $t \ge 1$, where each $S_i$ is a finite simple ring. For each $i$, let $M_i = S_i  M$, so that $M = \bigoplus_{i=1}^t M_i$. If $M_i$ is a cyclic $S_i$-module for each $i$, then $M$ is not coverable. So, at least one $M_i$ must admit a cover by proper $S_i$-submodules. We may now apply \cite[Prop.\ 2.5]{Khare_Tikaradze_2022} (which is the analog of Lemma \ref{lem: direct products of rings} for modules) and conclude that there exists $1 \le j \le t$ such that $\sigma_{R}(M) = \sigma_{S_j}(M_j)$.
\end{proof}

\begin{proof}[Proof of Theorem \ref{thm: all module covers}]
By Lemmas \ref{lem: reduce to finite semisimple} and \ref{lem: reduce to finite simple}, we may assume that $R$ is a finite simple ring and $M$ is a finite left $R$-module. So, $R=M_n(\F_q)$ for some prime power $q$ and an integer $n \ge 1$. The $R$-module $M$ is semisimple, so we may write $M = \bigoplus_{i=1}^\lambda L_i$, where $\lambda \ge 1$ and each $L_i$ is a simple $R$-module. Now, each $L_i$ is isomorphic (as a left $R$-module) to $\F_q^n$. Treating the elements of each $L_i$ as column vectors, we may express $M$ as the set of all $n \times \lambda$ matrices with entries from $\F_q$, and treat the action of $R$ on $M$ as matrix multiplication.

If $1 \le \lambda \le n$, then $M$ is the cyclic $R$-module generated by the $n \times \lambda$ matrix where each diagonal entry is 1 and all other entries are 0. Since $M$ is coverable, we must have $\lambda \ge n+1$. In this case, we may view $M$ as a nonunital ring with a left identity such that the left ideals of $M$ are precisely the left $R$-submodules of $M$. Indeed, embed $M$ in the matrix ring $M_{\lambda}(\F_q)$ by sending each $m \in M$ to the $\lambda \times \lambda$ matrix $\binom{m}{0}$, where the final $\lambda - n$ rows are all zero. One may then check (cf.\ Lemma \ref{lem: ideals and modules}) that $L \subseteq M$ is a left ideal of $M$ if and only if $L$ is a left $R$-submodule of $M$. Thus, $\sigma_{R}(M) = \eta_\ell(M)$. Finally, the only $\eta_\ell$-elementary residue rings of $M$ are isomorphic to the ring of Theorem \ref{thm:main}(1). Therefore, $\sigma_R(M)=\tfrac{q^{n+1}-1}{q-1}$.
\end{proof} 

\section*{Acknowledgments}
The authors thank the referee for their feedback and careful reading of the paper.

\section*{Disclosure Statement}
The authors have no competing interests to declare.

\bibliographystyle{plainurl}
\bibliography{references-covering-rings-by-proper-ideals.bib}

@article{Chen_2025,
 author = {Chen, Malcolm Hoong Wai},
 title = {Rings as unions of proper ideals},
 year = {2025},
pages={6 p.},
 howpublished = {Preprint, {arXiv}:2508.05455 [math.{RA}] (2025)},
 keywords = {16P10,16Z05,05E16},
 url = {https://arxiv.org/abs/2508.05455},
 arXiv = {arXiv:2508.05455},
note = {Preprint, {arXiv}:2508.05455}
}

@article {Cohn_1994,
    AUTHOR = {Cohn, J. H. E.},
     TITLE = {On {$n$}-sum groups},
   JOURNAL = {Math. Scand.},
  FJOURNAL = {Mathematica Scandinavica},
    VOLUME = {75},
      YEAR = {1994},
    NUMBER = {1},
     PAGES = {44--58},
      ISSN = {0025-5521,1903-1807},
   MRCLASS = {20D60 (20E15)},

       DOI = {10.7146/math.scand.a-12501},
       URL = {https://doi.org/10.7146/math.scand.a-12501},
}

@article {Garonzi_Kappe_Swartz_2022,
    AUTHOR = {Garonzi, Martino and Kappe, Luise-Charlotte and Swartz, Eric},
     TITLE = {On integers that are covering numbers of groups},
   JOURNAL = {Exp. Math.},
  FJOURNAL = {Experimental Mathematics},
    VOLUME = {31},
      YEAR = {2022},
    NUMBER = {2},
     PAGES = {425--443},
      ISSN = {1058-6458,1944-950X},
   MRCLASS = {20D60 (20B15)},

       DOI = {10.1080/10586458.2019.1636425},
       URL = {https://doi.org/10.1080/10586458.2019.1636425},
}

@article {Khare_2009,
    AUTHOR = {Khare, Apoorva},
     TITLE = {Vector spaces as unions of proper subspaces},
   JOURNAL = {Linear Algebra Appl.},
  FJOURNAL = {Linear Algebra and its Applications},
    VOLUME = {431},
      YEAR = {2009},
    NUMBER = {9},
     PAGES = {1681--1686},
      ISSN = {0024-3795,1873-1856},
   MRCLASS = {15A03},

       DOI = {10.1016/j.laa.2009.06.001},
       URL = {https://doi.org/10.1016/j.laa.2009.06.001},
}

@article {Khare_Tikaradze_2022,
    AUTHOR = {Khare, Apoorva and Tikaradze, Akaki},
     TITLE = {Covering modules by proper submodules},
   JOURNAL = {Comm. Algebra},
  FJOURNAL = {Communications in Algebra},
    VOLUME = {50},
      YEAR = {2022},
    NUMBER = {2},
     PAGES = {498--507},
      ISSN = {0092-7872,1532-4125},
   MRCLASS = {13C05 (13F05 13F10 13L05)},

       DOI = {10.1080/00927872.2021.1959922},
       URL = {https://doi.org/10.1080/00927872.2021.1959922},
}

@book {Lam_1991,
    AUTHOR = {Lam, T. Y.},
     TITLE = {A first course in noncommutative rings},
    SERIES = {Graduate Texts in Mathematics},
    VOLUME = {131},
 PUBLISHER = {Springer-Verlag, New York},
      YEAR = {1991},
     PAGES = {xvi+397},
      ISBN = {0-387-97523-3},
   MRCLASS = {16-01},

       DOI = {10.1007/978-1-4684-0406-7},
       URL = {https://doi.org/10.1007/978-1-4684-0406-7},
}

@book {McDonald_1974,
    AUTHOR = {McDonald, Bernard R.},
     TITLE = {Finite rings with identity},
    SERIES = {Pure and Applied Mathematics},
    VOLUME = {28},
 PUBLISHER = {Marcel Dekker, Inc., New York},
      YEAR = {1974},
     PAGES = {ix+429},
   MRCLASS = {16A44 (13B05)},

}

@article {Parmenter_1994,
    AUTHOR = {Parmenter, M. M.},
     TITLE = {Finite coverings of rings by ideals},
   JOURNAL = {Canad. Math. Bull.},
  FJOURNAL = {Canadian Mathematical Bulletin. Bulletin Canadien de
              Math\'ematiques},
    VOLUME = {37},
      YEAR = {1994},
    NUMBER = {1},
     PAGES = {97--99},
      ISSN = {0008-4395,1496-4287},
   MRCLASS = {16D25},

       DOI = {10.4153/CMB-1994-015-5},
       URL = {https://doi.org/10.4153/CMB-1994-015-5},
}

@book {Pierce_1982,
    AUTHOR = {Pierce, Richard S.},
     TITLE = {Associative algebras},
    SERIES = {Studies in the History of Modern Science},
    VOLUME = {9},
 PUBLISHER = {Springer-Verlag, New York-Berlin},
      YEAR = {1982},
     PAGES = {xii+436},
      ISBN = {0-387-90693-2},
   MRCLASS = {16-01 (12-01)},

}

@MISC {rschwieb_2016,
    TITLE = {Jacobson radical of extension ring},
    AUTHOR = {rschwieb (https://math.stackexchange.com/users/29335/rschwieb)},
    HOWPUBLISHED = {Mathematics Stack Exchange},
    NOTE = {URL:https://math.stackexchange.com/q/1879263 (version: 2016-08-02)},
    EPRINT = {https://math.stackexchange.com/q/1879263},
    URL = {https://math.stackexchange.com/q/1879263}
}

@article {Swartz_Werner_2021,
    AUTHOR = {Swartz, Eric and Werner, Nicholas J.},
     TITLE = {Covering numbers of commutative rings},
   JOURNAL = {J. Pure Appl. Algebra},
  FJOURNAL = {Journal of Pure and Applied Algebra},
    VOLUME = {225},
      YEAR = {2021},
    NUMBER = {8},
     PAGES = {Paper No. 106622, 17 p.},
      ISSN = {0022-4049,1873-1376},
   MRCLASS = {16P10 (05E16)},

       DOI = {10.1016/j.jpaa.2020.106622},
       URL = {https://doi.org/10.1016/j.jpaa.2020.106622},
}

@article {Tomkinson_1997,
    AUTHOR = {Tomkinson, M. J.},
     TITLE = {Groups as the union of proper subgroups},
   JOURNAL = {Math. Scand.},
  FJOURNAL = {Mathematica Scandinavica},
    VOLUME = {81},
      YEAR = {1997},
    NUMBER = {2},
     PAGES = {191--198},
      ISSN = {0025-5521,1903-1807},
   MRCLASS = {20D60 (20D10)},

       DOI = {10.7146/math.scand.a-12873},
       URL = {https://doi.org/10.7146/math.scand.a-12873},
}

@article {Werner_2015,
    AUTHOR = {Werner, Nicholas J.},
     TITLE = {Covering numbers of finite rings},
   JOURNAL = {Amer. Math. Monthly},
  FJOURNAL = {American Mathematical Monthly},
    VOLUME = {122},
      YEAR = {2015},
    NUMBER = {6},
     PAGES = {552--566},
      ISSN = {0002-9890},
   MRCLASS = {16P10 (05E15)},

       DOI = {10.4169/amer.math.monthly.122.6.552},
       URL = {https://doi.org/10.4169/amer.math.monthly.122.6.552},
}

@article{Scorza_1926,
 author = {Scorza, G.},
 title = {I gruppi che possono pensarsi come somme di tre loro sottogruppi},
 fjournal = {Bollettino della Unione Matematica Italiana},
 journal = {Boll. Unione Mat. Ital.},
 issn = {0041-7084},
 volume = {5},
 pages = {216--218},
 year = {1926},
 zbMATH = {2585723},
 JFM = {52.0113.03}
}

@article{Clark_2012,
 author = {Clark, Pete L.},
 title = {Covering numbers in linear algebra},
 fjournal = {American Mathematical Monthly},
 journal = {Amer. Math. Monthly},
 issn = {0002-9890},
 volume = {119},
 number = {1},
 pages = {65--67},
 year = {2012},

 doi = {10.4169/amer.math.monthly.119.01.065},
 keywords = {15A03,97H60},
 zbMATH = {6071091},
 Zbl = {1245.15004}
}

@article{Ghosh_2022,
 author = {Ghosh, Soham},
 title = {The {Zariski} covering number for vector spaces and modules},
 fjournal = {Communications in Algebra},
 journal = {Comm. Algebra},
 issn = {0092-7872},
 volume = {50},
 number = {5},
 pages = {1994--2017},
 year = {2022},

 doi = {10.1080/00927872.2021.1995741},
 keywords = {13F05,13H99,13F10,54E52,54H99},
 zbMATH = {7502718},
 Zbl = {1483.13034}
}

@article{Gagola_Kappe_2016,
 author = {Gagola III, Stephen M. and Kappe, Luise-Charlotte},
 title = {On the covering number of loops},
 fjournal = {Expositiones Mathematicae},
 journal = {Expo. Math.},
 issn = {0723-0869},
 volume = {34},
 number = {4},
 pages = {436--447},
 year = {2016},

 doi = {10.1016/j.exmath.2015.10.004},
 keywords = {20N05,20D60},
 zbMATH = {6659604},
 Zbl = {1375.20064}
}

@article{Donoven_Kappe_2023,
 author = {Donoven, Casey R. and Kappe, Luise-Charlotte},
 title = {Finite coverings of semigroups and related structures},
 fjournal = {International Journal of Group Theory},
 journal = {Int. J. Group Theory},
 issn = {2251-7650},
 volume = {12},
 number = {3},
 pages = {205--222},
 year = {2023},

 doi = {10.22108/ijgt.2022.131538.1759},
 keywords = {20M10,20M18},
 zbMATH = {7648625},
 Zbl = {1521.20127}
}

@incollection{Kappe_2014,
 author = {Kappe, Luise-Charlotte},
 title = {Finite coverings: a journey through groups, loops, rings and semigroups},
 booktitle = {in: Group theory, combinatorics, and computing, in: Contemp. {M}ath., vol. 611, {A}mer. {M}ath. {S}oc., {P}rovidence, {RI}},
 isbn = {978-0-8218-9435-4; 978-1-4704-1524-2},
 pages = {79--88},
 year = {2014},
volume={611},
 publisher = {Providence, RI: American Mathematical Society (AMS)},

 keywords = {20F99,20D60,20E07,20E15,20F45,20-02,16-02,20M10,20N05},
 zbMATH = {6308217},
 Zbl = {1305.20054}
}

@article{Lucchini_Maroti_2012,
 author = {Lucchini, Andrea and Mar{\'o}ti, Attila},
 title = {Rings as the unions of proper subrings},
 fjournal = {Algebras and Representation Theory},
 journal = {Algebr. Represent. Theory},
 issn = {1386-923X},
 volume = {15},
 number = {6},
 pages = {1035--1047},
 year = {2012},

 doi = {10.1007/s10468-011-9277-3},
 keywords = {16P10,16D70,16S50},
 zbMATH = {6115376},
 Zbl = {1267.16020}
}

@article{Cai_Werner_2019,
 author = {Cai, Merrick and Werner, Nicholas J.},
 title = {Covering numbers of upper triangular matrix rings over finite fields},
 fjournal = {Involve},
 journal = {Involve},
 issn = {1944-4176},
 volume = {12},
 number = {6},
 pages = {1005--1013},
 year = {2019},

 doi = {10.2140/involve.2019.12.1005},
 keywords = {16P10,16S50,05E15},
 zbMATH = {7116066},
 Zbl = {1422.16016}
}

@article{Crestani_2012,
 author = {Crestani, Eleonora},
 title = {Sets of elements that pairwise generate a matrix ring},
 fjournal = {Communications in Algebra},
 journal = {Comm. Algebra},
 issn = {0092-7872},
 volume = {40},
 number = {4},
 pages = {1570--1575},
 year = {2012},

 doi = {10.1080/00927872.2010.544275},
 keywords = {16S50,16P10,20G40,15A30,16S15},
 zbMATH = {6040383},
 Zbl = {1248.16025}
}

@article{Peruginelli_Werner_2018,
 author = {Peruginelli, G. and Werner, N. J.},
 title = {Maximal subrings and covering numbers of finite semisimple rings},
 fjournal = {Communications in Algebra},
 journal = {Comm. Algebra},
 issn = {0092-7872},
 volume = {46},
 number = {11},
 pages = {4724--4738},
 year = {2018},

 doi = {10.1080/00927872.2018.1455099},
 keywords = {16P10,05E15},
 url = {hdl.handle.net/11577/3269898},
 zbMATH = {6959291},
 Zbl = {1398.16018}
}

@article{Brodie_Chamberlain_Kappe_1988,
 author = {Brodie, M. A. and Chamberlain, R. F. and Kappe, L.-C.},
 title = {Finite coverings by normal subgroups},
 fjournal = {Proceedings of the American Mathematical Society},
 journal = {Proc. Amer. Math. Soc.},
 issn = {0002-9939},
 volume = {104},
 number = {3},
 pages = {669--674},
 year = {1988},

 doi = {10.2307/2046770},
 keywords = {20E15,20F24,20E07},
 zbMATH = {4130653},
 Zbl = {0691.20019}
}

@article{Neumann_1954,
 author = {Neumann, B. H.},
 title = {Groups covered by permutable subsets},
 fjournal = {Journal of the London Mathematical Society},
 journal = {J. Lond. Math. Soc.},
 issn = {0024-6107},
 volume = {29},
 pages = {236--248},
 year = {1954},

 doi = {10.1112/jlms/s1-29.2.236},
 zbMATH = {3086825},
 Zbl = {0055.01604}
}

@article{Lewin_1967,
 author = {Lewin, Jacques},
 title = {Subrings of finite index in finitely generated rings},
 fjournal = {Journal of Algebra},
 journal = {J. Algebra},
 issn = {0021-8693},
 volume = {5},
 pages = {84--88},
 year = {1967},

 doi = {10.1016/0021-8693(67)90027-0},
 zbMATH = {3230388},
 Zbl = {0143.05303}
}

@article{Swartz_Werner_2024a,
 author = {Swartz, Eric and Werner, Nicholas J.},
 title = {A new infinite family of $\sigma$-elementary rings},
 fjournal = {Communications in Algebra},
 journal = {Comm. Algebra},
 issn = {0092-7872},
 volume = {52},
 number = {1},
 pages = {172--188},
 year = {2024},

 doi = {10.1080/00927872.2023.2237575},
 keywords = {16P10,05E16},
 zbMATH = {7790881}
}

@article{Swartz_Werner_2024b,
    AUTHOR = {Swartz, Eric and Werner, Nicholas J.},
     TITLE = {The covering numbers of rings},
   JOURNAL = {J. Algebra},
  FJOURNAL = {Journal of Algebra},
    VOLUME = {639},
      YEAR = {2024},
     PAGES = {249--280},
      ISSN = {0021-8693,1090-266X},
   MRCLASS = {16P10 (05E16)},

       DOI = {10.1016/j.jalgebra.2023.10.016},
       URL = {https://doi.org/10.1016/j.jalgebra.2023.10.016},
}

@article{ONeill_1980,
 author = {O'Neill, John D.},
 title = {Rings covered by proper ideals},
 fjournal = {Communications in Algebra},
 journal = {Comm. Algebra},
 issn = {0092-7872},
 volume = {8},
 number = {6},
 pages = {505--515},
 year = {1980},
doi = {10.1080/00927878008822472}
}

@article{Bhargava_2002,
 author = {Bhargava, Mira},
 title = {When is a group the union of proper normal subgroups?},
 fjournal = {American Mathematical Monthly},
 journal = {Amer. Math. Monthly},
 issn = {0002-9890},
 volume = {109},
 number = {5},
 pages = {471--473},
 year = {2002},

 doi = {10.2307/2695649},
 keywords = {20E07,20E15},
 zbMATH = {1956324},
 Zbl = {1052.20022}
}

\end{document}